\theoremstyle{definition}
\newtheorem{Definition}{Definition}[section]
\newtheorem{Theorem}[Definition]{Theorem}
\newtheorem{Lemma}[Definition]{Lemma}
\newtheorem{Remark}[Definition]{Remark}
\newtheorem{Proposition}[Definition]{Proposition}
\newtheorem{Example}[Definition]{Example}
\title{The $L_\infty$-algebra of a symplectic manifold}
\author{Bas Janssens\thanks{Institute of Applied Mathematics, Delft University of Technology, Delft, The Netherlands.}, ~Leonid Ryvkin\thanks{Mathematical Institute, Georg-August-Universität Göttingen, Göttingen, Germany.}~ and Cornelia Vizman\thanks{Department of Mathematics, West University of Timi\c soara, Timi\c soara, Romania.}}
\begin{document}

\maketitle
\begin{abstract}
    We construct an $L_\infty$-algebra on the truncated canonical homology complex of a symplectic manifold, which naturally projects to the universal central extension of the Lie algebra of Hamiltonian vector fields. 
\end{abstract}

\tableofcontents

\paragraph{Acknowledgements}  L. R. is supported by the PRIME program of the German
Academic Exchange Service with funds from the German Federal Ministry of Education and Research and by the CNRS project GraNum.
C. V. was partially supported by CNCS UEFISCDI, project number
PN-III-P4-ID-PCE-2016-0778. 
B.J. is supported by the NWO grant 639.032.734
``Cohomology and representation theory of infinite dimensional Lie groups''.
The authors would like to thank the Erwin Schrödinger International Institute for Mathematics and Physics (ESI),
{in particular the program ``Higher Structures and Field Theory''}, where part of the work was carried out. We would like to thank Kevin van Helden and Camille Laurent-Gengoux for several useful comments.

\section{Introduction}
This work is a continuation of the articles \cite{MR3556431, MR3919959}, where the universal central extension of the Lie algebra of Hamiltonian vector fields $\mathfrak X_{Ham}(M,\omega)$ of a symplectic manifold $(M,\omega)$ has been investigated. This universal central extension is naturally described as a quotient $\frac{\Omega^1(M)}{\delta \Omega^2(M)}$, where $\delta$ is the Koszul differential of the canonical (Poisson) homology of $(M,\omega)$ \cite{zbMATH03996707, zbMATH04032666}. \\

Following the creed ``Lie on the quotient means $L_\infty$ on the complex'' this article is devoted to finding an $L_\infty$-algebra on the complex $(\Omega^{\geq 1}(M),\delta)$, which after quotienting returns the universal central extension $\frac{\Omega^1(M)}{\delta \Omega^2(M)}$. The $L_\infty$-algebra we find has similar features as the $L_\infty$-algebras of multisymplectic (\cite{zbMATH06046103}) and multicontact (\cite{zbMATH06476500}) manifolds, discovered in the last years.\\

We start by briefly recalling the relevant results from \cite{MR3556431, MR3919959} and reviewing the relevant concepts about $L_\infty$-algebras. We then show how the $L_\infty$-algebra of multisymplectic observables introduced in \cite{zbMATH06046103} yields the $L_\infty$-structure behind the universal central extension of the Lie algebra of divergence-free vector fields originally studied by \cite{MR1377323}. We then turn to the operators necessary  for constructing our $L_\infty$-algebra and prove our main result Theorem \ref{thm:main}. 
Finally, we show that an $L_{\infty}$-algebra can be constructed for Poisson manifolds, provided that a certain obstruction in $H^{\mathrm{can}}_1(M)$ vanishes.
A more detailed and elementary account of this work, also treating presymplectic regular Poisson manifolds, appears in the master thesis \cite{Kevin}, supervised by the first author.

\section{Universal central extensions of Lie algebras of vector fields}

In this section we quickly recall the universal central extension of the Lie algebra of Hamiltonian vector fields \cite{MR3556431} and of the Lie algebra of exact divergence-free vector fields \cite{MR1377323}.

\subsection{Exact divergence free vector fields}

Let $M$ be a compact manifold of dimension $n\ge 3$, endowed with a volume form $\mu$. A divergence free vector field is called exact if its contraction with $\mu$ is an exact $(n-1)$-form, i.e. an exact divergence free vector field $X_{\alpha}$ with potential $\alpha\in\Omega^{n-2}(M)$ that satisfies $\iota_{X_\alpha}\mu=-d\alpha$. These vector fields form the ideal $\mathfrak{X}_{\rm ex}(M,\mu)$ (of co-dimension $\dim H_{\rm dR}^{n-1}(M)$) in the Lie algebra of divergence free vector fields. Indeed, $\iota_{[X_\alpha,Y]}\mu=d L_Y\alpha$ for any divergence free vector field $Y$. The Lie algebra $\mathfrak{X}_{\rm ex}(M,\mu)$ is perfect \cite{zbMATH03454589}.\\

The vector space $\Omega^{n-2}(M)/d\Omega^{n-3}(M)$
can be endowed with a natural Lie algebra bracket
\[
[[\alpha],[\beta]]=[\iota_{X_\alpha}\iota_{X_\beta}\mu],\quad\alpha,\beta\in\Omega^{n-2}(M),
\]
so that the projection $\alpha\mapsto X_\alpha$ becomes a Lie algebra epimorphism. In \cite{MR1377323}, Roger sketches a proof for the following result.

\begin{Theorem}[\cite{MR1377323}]\label{thm:vol}
The Lichnerowicz central extension
\[
H^{n-2}_{\rm dR}(M)\longrightarrow \Omega^{n-2}(M)/d\Omega^{n-3}(M)\longrightarrow \mathfrak{X}_{\rm ex}(M,\mu)
\]
is the universal central extension of the Lie algebra of exact divergence free vector fields.
\end{Theorem}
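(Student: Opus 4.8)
The plan is to show the displayed sequence is a central extension and then establish universality by the standard criterion: a central extension $\mathfrak z \to \hat{\mathfrak g} \xrightarrow{\pi} \mathfrak g$ of a perfect Lie algebra is universal precisely when $\hat{\mathfrak g}$ is itself perfect and admits no nontrivial central extensions, equivalently when every $V$-valued continuous $2$-cocycle on $\mathfrak g$ pulls back from the defining cocycle. I write $\hat{\mathfrak g}=\Omega^{n-2}(M)/d\Omega^{n-3}(M)$ and $\mathfrak g=\mathfrak X_{\rm ex}(M,\mu)$, the latter being perfect by \cite{zbMATH03454589}. First I would confirm the extension structure. Replacing $\alpha$ by $\alpha+d\gamma$ leaves $X_\alpha$ unchanged, so the bracket $[[\alpha],[\beta]]=[\iota_{X_\alpha}\iota_{X_\beta}\mu]$ is independent of representatives, and the identity $\iota_{[X_\alpha,Y]}\mu=dL_Y\alpha$ recalled above, applied with $Y=X_\beta$, gives
\[
\iota_{[X_\alpha,X_\beta]}\mu = L_{X_\alpha}\iota_{X_\beta}\mu = d\,\iota_{X_\alpha}\iota_{X_\beta}\mu,
\]
so that $[X_\alpha,X_\beta]=X_{\iota_{X_\alpha}\iota_{X_\beta}\mu}$ up to the sign conventions in force and $\pi\colon[\alpha]\mapsto X_\alpha$ is a surjective homomorphism. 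Its kernel consists of the closed $(n-2)$-forms modulo exact ones, namely $H^{n-2}_{\rm dR}(M)$, and since the bracket formula depends only on the images under $\pi$, this kernel is central.

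Second, I would prove that $\hat{\mathfrak g}$ is perfect. Because $\pi$ surjects onto the perfect algebra $\mathfrak g$, commutators already exhaust $\mathfrak g$, whence $\hat{\mathfrak g}=[\hat{\mathfrak g},\hat{\mathfrak g}]+H^{n-2}_{\rm dR}(M)$, and it remains to show that the central summand lies in $[\hat{\mathfrak g},\hat{\mathfrak g}]$. Concretely one must show that every de Rham class in $H^{n-2}_{\rm dR}(M)$ is a finite sum of Lichnerowicz brackets $[\iota_{X_\alpha}\iota_{X_\beta}\mu]$. I would establish this by a local argument: using a partition of unity subordinate to charts in which $\mu$ is standard and invoking Poincaré duality $H^{n-2}_{\rm dR}(M)\cong H^2_{\rm dR}(M)$, one represents an arbitrary class by forms supported in such charts and, within each chart, exhibits the representative as a contraction $\iota_{X_\alpha}\iota_{X_\beta}\mu$ of explicitly constructed exact divergence-free fields. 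The hypothesis $n\ge 3$ is what provides enough room to build these fields.

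Third, and this is the main obstacle, I would prove universality by verifying the lifting condition: every continuous $V$-valued $2$-cocycle $\sigma$ on $\mathfrak g$ is cohomologous to $\ell\circ\psi$ for a unique linear map $\ell\colon H^{n-2}_{\rm dR}(M)\to V$, where $\psi(X_\alpha,X_\beta)=[\iota_{X_\alpha}\iota_{X_\beta}\mu]$. I would attack this by locality: continuous cocycles on Lie algebras of vector fields are represented by differential operators (Peetre-type and Gelfand--Fuks techniques), so $\sigma$ may be reduced modulo coboundaries to a local cocycle; an invariant-theoretic classification of local, $\mu$-compatible expressions then singles out the Lichnerowicz cocycle as the only possibility, and a de Rham/globalization argument identifies the resulting space of classes with $H^{n-2}_{\rm dR}(M)$, also showing that $\lambda\circ\psi$ is a coboundary only for $\lambda=0$. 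Controlling continuity and locality, executing the invariant-theoretic step, and matching the outcome with de Rham cohomology is the technical heart of the argument; once it is in place, combining it with the perfectness of $\hat{\mathfrak g}$ yields universality by the criterion above, while the remaining verifications are formal.
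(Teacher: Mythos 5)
The paper does not prove this theorem at all: it is imported as background from Roger's paper \cite{MR1377323}, and the text explicitly notes that even there only a sketch of a proof is given. So there is no in-paper argument to compare yours against, and your proposal has to stand on its own. Its first step does: the well-definedness of the bracket under $\alpha\mapsto\alpha+d\gamma$, the computation $\iota_{[X_\alpha,X_\beta]}\mu = L_{X_\alpha}\iota_{X_\beta}\mu = d\,\iota_{X_\alpha}\iota_{X_\beta}\mu$ (using $L_{X_\alpha}\mu=0$ and $d\iota_{X_\beta}\mu=0$), the identification of $\ker\pi$ with closed modulo exact $(n-2)$-forms, and its centrality are all correct and routine, and your appeal to the recognition criterion for universal central extensions of perfect Lie algebras is the right framework.

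Beyond that, however, the proposal is a plan rather than a proof, and the deferred steps are precisely where the theorem lives. Two concrete issues. First, you restrict throughout to \emph{continuous} cocycles, whereas the universality claimed here is in the category of all central extensions; in the rigorous treatment of the analogous Hamiltonian case \cite{MR3556431}, cocycles are not assumed continuous, and locality is extracted algebraically (e.g., from the vanishing of cocycles on brackets of elements with disjoint support), not from continuity. Moreover, even granting continuity, ``continuous cocycles are represented by differential operators'' is not an off-the-shelf consequence of Peetre-type results: a cocycle need not be local, and showing it is local \emph{modulo coboundaries} is itself the hard analytic step, which you name but do not perform; the same goes for the invariant-theoretic classification that is supposed to single out the Lichnerowicz cocycle, and for the injectivity statement that $\lambda\circ\psi$ is a coboundary only for $\lambda=0$. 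Second, your perfectness argument for $\Omega^{n-2}(M)/d\Omega^{n-3}(M)$ is only gestured at: in each chart you must produce \emph{globally defined} exact divergence-free fields $X_\alpha, X_\beta$ (compactly supported potentials suffice, and $n\ge 3$ indeed gives the room), and then show that sums of classes $[\iota_{X_\alpha}\iota_{X_\beta}\mu]$ exhaust the relevant cohomology; none of the explicit constructions are exhibited. So the proposal correctly identifies the architecture of the proof but leaves its technical heart — cocycle locality/classification and the explicit perfectness computation — unproven; as written it is an honest outline with a genuine gap, not a complete argument.
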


Thus the second Lie algebra cohomology group 
$H^2(\mathfrak{X}_{\rm ex}(M,\mu))$ is isomorphic to $H_{\rm dR}^2(M)$, with the isomorphism realized by assigning to a closed 2-form $\eta$ the 2-cocycle $(X,Y)\mapsto\int_M\eta(X,Y)\mu$ on the Lie algebra of exact divergence free vector fields. \\

In the next section, we will see that a similar construction is possible for the Hamiltonian vector fields of a symplectic manifold.

\subsection{Hamiltonian vector fields}

Let $(M,\omega)$ be a compact $2n$-dimensional symplectic manifold with induced Poisson bi-vector field $\pi=\omega^{-1}$. The canonical homology $H^{\rm can}(M)$
is defined as the homology of the complex $\Omega(M)$
equipped with the degree decreasing Koszul differential 
$\delta=\iota_\pi d-d\iota_\pi$.
By \cite{zbMATH04032666},
the symplectic Hodge star operator provides an isomorphism $H_k^{\rm can}(M)\simeq H_{\rm dR}^{2n-k}(M)$.

The Hamiltonian vector field $X_f$ with Hamiltonian function $f\in C^\infty(M)$ is uniquely defined by the identity $\iota_{X_f}\omega=-df$.  
The Lie algebra of Hamiltonian vector fields $\mathfrak{X}_{\rm Ham}(M,\omega)$ is perfect
\cite{zbMATH03333122}.
The quotient space $\Omega^1(M)/\delta\Omega^2(M)$ can be endowed with a natural Lie bracket 
\[
[[\alpha],[\beta]]=[\delta\alpha\cdot d\delta\beta],\quad\alpha,\beta\in\Omega^1(M),
\]
so that the projection $[\alpha]\mapsto X_{\delta\alpha}$ becomes a Lie algebra epimorphism.

\begin{Theorem}[\cite{MR3556431}]
The central extension
\[
H_{1}^{\rm can}(M)\longrightarrow \Omega^{1}(M)/\delta\Omega^{2}(M)\longrightarrow \mathfrak{X}_{\rm Ham}(M,\omega)
\]
is the universal central extension of the Lie algebra of Hamiltonian vector fields.
\end{Theorem}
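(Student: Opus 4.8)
The plan is to verify the abstract criterion for universality: for a perfect Lie algebra $\mathfrak g$, a central extension $\mathfrak z\hookrightarrow\hat{\mathfrak g}\twoheadrightarrow\mathfrak g$ is the universal one precisely when the total space $\hat{\mathfrak g}$ is again perfect and \emph{centrally closed}, meaning every central extension of $\hat{\mathfrak g}$ splits, i.e. $H^2(\hat{\mathfrak g};\mathbb R)=0$. Since $\mathfrak X_{\rm Ham}(M,\omega)$ is perfect \cite{zbMATH03333122}, it remains to treat three points for $\hat{\mathfrak g}:=\Omega^1(M)/\delta\Omega^2(M)$: that the displayed sequence is a central extension with kernel $H_1^{\rm can}(M)$, that $\hat{\mathfrak g}$ is perfect, and that $\hat{\mathfrak g}$ is centrally closed.

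First I would settle exactness. The map $[\alpha]\mapsto X_{\delta\alpha}$ is well defined since $\delta^2=0$, and it is surjective because $X_f$ depends on $f$ only up to a constant while $\delta\colon\Omega^1(M)\to C^\infty(M)$ has image the zero-average functions. For the kernel, $X_{\delta\alpha}=0$ forces $\delta\alpha$ to be a constant function; but $\delta\alpha$ automatically has zero average, $\int_M\delta\alpha\,\tfrac{\omega^n}{n!}=0$, so on a compact connected $M$ the constant must vanish. Hence the kernel is $\ker(\delta|_{\Omega^1})/\delta\Omega^2=H_1^{\rm can}(M)$, and it is central because $\delta\alpha=0$ kills the bracket $[\delta\alpha\cdot d\delta\beta]$. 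A useful reformulation records that $\delta(\delta\alpha\cdot d\delta\beta)=\{\delta\alpha,\delta\beta\}$, so $\delta$ is a Lie homomorphism onto the Poisson algebra $C^\infty_0(M)$ of zero-average functions; since $f\mapsto X_f$ identifies the latter with $\mathfrak X_{\rm Ham}(M,\omega)$, the extension is equivalently $H_1^{\rm can}(M)\hookrightarrow\hat{\mathfrak g}\xrightarrow{\delta}C^\infty_0(M)$.

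Next, perfectness of $\hat{\mathfrak g}$. Because the extension is central and the quotient is perfect, the commutator subalgebra already surjects onto $\mathfrak X_{\rm Ham}(M,\omega)$, so it suffices to show that every central class in $H_1^{\rm can}(M)$ is a finite sum of brackets $[\delta\alpha_i\cdot d\delta\beta_i]$. I would prove this by a locality argument, decomposing a $\delta$-closed representative through a partition of unity subordinate to Darboux charts and writing the local pieces as combinations of elementary products $f\,dg$ that are manifestly commutators, the corresponding perfectness statement for the Poisson algebra being classical.

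The main obstacle is the third point, the vanishing $H^2(\hat{\mathfrak g};\mathbb R)=0$. My strategy would be to exploit the presentation of $\hat{\mathfrak g}$ as a central extension of the zero-average Poisson algebra $C^\infty_0(M)\cong\mathfrak X_{\rm Ham}(M,\omega)$ by $H_1^{\rm can}(M)$, pulling back an arbitrary continuous $2$-cocycle and trivializing it in stages via a Hochschild--Serre analysis along $\delta$: the continuous cohomology of the Poisson algebra is governed by the symplectic (equivalently de Rham) cohomology of $M$ through the Hodge star isomorphism of \cite{zbMATH04032666}, and a local-to-global reduction to Darboux charts controls the cocycle. Equivalently, and in parallel with the computation $H^2(\mathfrak X_{\rm ex}(M,\mu))\cong H^2_{\rm dR}(M)$ of the volume case, one may compute $H^2(\mathfrak X_{\rm Ham}(M,\omega);\mathbb R)\cong H_1^{\rm can}(M)^\ast$ and check that the given extension realizes the full space. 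The genuinely delicate issue, which is where the real work lies, is the infinite-dimensional bookkeeping: the cohomology must be handled with continuous cochains, and one must guarantee that the gluing of the local trivializations leaves no residual class, thereby confirming that $\hat{\mathfrak g}$ is maximal rather than merely possessing the correct kernel.
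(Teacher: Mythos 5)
This theorem is not proved in the present paper at all: it is quoted from \cite{MR3556431}, so there is no internal proof to compare against, and your proposal must stand on its own. Its skeleton is the standard recognition criterion (for perfect $\mathfrak g$, a central extension $\hat{\mathfrak g}\to\mathfrak g$ is universal iff $\hat{\mathfrak g}$ is perfect and centrally closed), and your first two points are essentially sound. Exactness and centrality work as you say, using compactness and connectedness to identify $\mathrm{Im}(\delta|_{\Omega^1})$ with the zero-average functions. The perfectness argument via Darboux charts and a partition of unity is also viable, but note a wrinkle you gloss over: a local piece $f\,dg$ is a bracket only when \emph{both} factors lie in $\mathrm{Im}(\delta)$, i.e.\ have zero average, so you must absorb error terms of the form $\bar f\,d(g-\bar g)=d\bigl(\bar f(g-\bar g)\bigr)$; this requires the inclusion $d\Omega^0(M)\subseteq\delta\Omega^2(M)$, which holds by the symplectic Hodge star identity $df=-\delta * f\omega^{n-1}/(n-1)!$ quoted in Section 5 of this paper, but is not automatic and must be invoked.

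The genuine gap is your third point, which is where the entire content of the theorem sits. ``Pulling back a cocycle and trivializing it in stages via a Hochschild--Serre analysis'' and ``a local-to-global reduction to Darboux charts'' is a research plan, not an argument: you never exhibit the trivialization, never prove $H^2(\hat{\mathfrak g};\mathbb R)=0$, and never carry out the alternative computation $H^2(\mathfrak X_{\rm Ham}(M,\omega);\mathbb R)\cong H_1^{\rm can}(M)^*$ --- you yourself concede this is ``where the real work lies.'' That computation is precisely what occupies most of \cite{MR3556431}; reducing the theorem to it and stopping is not a proof. Two further problems lurk inside this step. First, you silently pass to \emph{continuous} cochains, but universality as stated concerns arbitrary central extensions, so you must either handle discontinuous cocycles or justify the reduction to continuous ones --- a nontrivial point that the cited paper treats explicitly. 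Second, in your alternative route, an abstract isomorphism $H^2(\mathfrak X_{\rm Ham})\cong H_1^{\rm can}(M)^*$ does not by itself finish the argument: you must show that the map $H_1^{\rm can}(M)^*\to H^2(\mathfrak X_{\rm Ham})$ induced by \emph{this particular} extension is bijective, and then use finite-dimensionality of $H_1^{\rm can}(M)$ (or a universal-coefficients argument giving $H_2(\mathfrak X_{\rm Ham})\cong H_1^{\rm can}(M)$) together with perfectness of $\hat{\mathfrak g}$ to upgrade this to universality. As it stands, the proposal correctly frames the problem and settles the easy parts, but leaves the decisive step unproven.
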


Thus the second Lie algebra cohomology group 
$H^2(\mathfrak{X}_{\rm Ham}(M,\omega))$ is isomorphic to \\%
$H_{2n-1}^{\rm can}(M)\simeq H_{\rm dR}^1(M)$, with the isomorphism realized by assigning to a closed 1-form $\alpha$ the 2-cocycle $(X_f,X_g)\mapsto\int_M f\alpha(X_g)\omega^n/n!$ on the Lie algebra of Hamiltonian vector fields \cite{MR1377323}.\\

In the case of a connected, non-compact symplectic manifold, similar results hold for the perfect Lie algebra $C^\infty(M)$ of smooth functions with Poisson bracket $\{f,g\}=\omega(X_f,X_g)$.

\begin{Theorem}[\cite{MR3556431}]\label{thm:symp}
The central extension
\[
H_{1}^{\rm can}(M)\longrightarrow \Omega^{1}(M)/\delta\Omega^{2}(M)\stackrel{\delta}{\longrightarrow} C^\infty(M)
\]
is the universal central extension of the Poisson Lie algebra $C^\infty(M)$, and $H^2(C^\infty(M))\simeq H_{{\rm dR},c}^1(M)$.
\end{Theorem}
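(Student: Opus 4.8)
The plan is to invoke the standard recognition principle for universal central extensions: if $\mathfrak g$ is perfect, then a central extension $\mathfrak z\hookrightarrow\widehat{\mathfrak g}\twoheadrightarrow\mathfrak g$ is universal precisely when $\widehat{\mathfrak g}$ is itself perfect and admits no non-trivial central extension (equivalently $H_2(\widehat{\mathfrak g})=0$). I would therefore organise the proof into four parts: (i) verify that the displayed sequence is a central extension with kernel $H_1^{\rm can}(M)$; (ii) show $C^\infty(M)$ is perfect; (iii) show $\widehat{\mathfrak g}:=\Omega^1(M)/\delta\Omega^2(M)$ is perfect; and (iv) show $\widehat{\mathfrak g}$ is centrally closed. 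The final cohomology statement then follows formally by a duality argument.

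For (i), the key identity is $\delta(f\,dg)=\iota_\pi(df\wedge dg)=\{f,g\}$, which simultaneously shows that the bracket $[[\alpha],[\beta]]=[\delta\alpha\,d\delta\beta]$ is well defined and that $\delta\colon\widehat{\mathfrak g}\to C^\infty(M)$ is a Lie algebra homomorphism. Its kernel is $\ker(\delta|_{\Omega^1})/\delta\Omega^2=H_1^{\rm can}(M)$, and this kernel is central because $\delta\alpha=0$ forces $[\delta\alpha\,d\delta\beta]=0$. Surjectivity is exactly where connectedness and non-compactness enter: since $H_0^{\rm can}(M)\simeq H^{2n}_{\rm dR}(M)$ vanishes for a connected non-compact $2n$-manifold, $\delta|_{\Omega^1}$ already hits all of $C^\infty(M)$ --- in contrast to the compact case, where its image is the codimension-one ideal of functions with vanishing Liouville integral and one only recovers $\mathfrak X_{\rm Ham}(M,\omega)$.

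Part (ii) is classical: in Darboux coordinates one has $\{q^i,p_i\}=1$, and a partition-of-unity argument writes an arbitrary function as a finite sum of Poisson brackets, so $C^\infty(M)$ is perfect (cf.\ \cite{MR3556431}). For (iii), applying $\delta$ to a bracket gives $\delta[\delta\alpha\,d\delta\beta]=\{\delta\alpha,\delta\beta\}$, so by (ii) the commutator subalgebra surjects onto $C^\infty(M)$ and hence $[\widehat{\mathfrak g},\widehat{\mathfrak g}]+H_1^{\rm can}(M)=\widehat{\mathfrak g}$. The subtle point is to absorb the central summand, i.e.\ to exhibit every canonical $1$-cycle as a finite sum $\sum_i[\delta\alpha_i\,d\delta\beta_i]$ modulo $\delta\Omega^2(M)$; I would establish this locally in Darboux charts and patch with a partition of unity, thereby concluding that $\widehat{\mathfrak g}$ is perfect.

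The main obstacle is part (iv): showing that $\widehat{\mathfrak g}$ admits no non-trivial continuous central extension, equivalently $H_2(\widehat{\mathfrak g})=0$. I would attack this by localisation --- restrict a given $2$-cocycle to Darboux charts, where the computation reduces to the Poisson algebra of $\mathbb R^{2n}$ and can be trivialised by explicit homotopy operators built from the Koszul differential, and then reassemble the global statement through a Mayer--Vietoris/partition-of-unity argument controlling the gluing. This is the genuinely analytic heart of the theorem and the place where the infinite-dimensional (continuous-cohomology) bookkeeping must be handled carefully. Once universality is established, the kernel of the universal central extension is $H_2(C^\infty(M))=H_1^{\rm can}(M)$, and the cohomology statement follows formally: for the perfect algebra $C^\infty(M)$ one has $H^2(C^\infty(M))\cong H_2(C^\infty(M))^{*}=H_1^{\rm can}(M)^{*}\cong H^{2n-1}_{\rm dR}(M)^{*}\cong H^1_{{\rm dR},c}(M)$ (continuous duals throughout), using the symplectic Hodge star for the middle isomorphism and Poincaré duality on the non-compact manifold $M$ for the last. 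Tracing the identifications, the class of a compactly supported closed $1$-form $\alpha$ corresponds to the cocycle $(f,g)\mapsto\int_M f\,\alpha(X_g)\,\omega^n/n!$.
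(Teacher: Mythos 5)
A preliminary caveat: this paper does not prove Theorem \ref{thm:symp} at all --- it is quoted from \cite{MR3556431} --- so your proposal can only be judged against what a complete argument requires. Your overall frame, the recognition principle that a central extension of a perfect Lie algebra is universal iff the middle term is perfect and centrally closed, is the right one, and parts (i) and (iii) are essentially sound. In fact (iii) is easier than you make it: every $1$-form on $M$ is a \emph{finite} sum $\sum_i f_i\,dg_i$ (e.g.\ via a Whitney embedding), and by surjectivity of $\delta\colon\Omega^1(M)\to C^\infty(M)$ each class $[f_i\,dg_i]$ is a bracket, so no separate absorption of the central summand is needed. In part (ii), however, your justification proves too much: the Darboux-plus-partition-of-unity sketch would apply verbatim to compact $M$, where $C^\infty(M)$ is \emph{not} perfect (every bracket satisfies $\int_M\{f,g\}\,\omega^n=0$); perfectness genuinely uses non-compactness, so this step must rest on the citation, not on the sketch.

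The genuine gap is part (iv), and it is not marginal --- it is the mathematical content of the theorem. The plan ``restrict a $2$-cocycle to Darboux charts, trivialize there, reassemble by Mayer--Vietoris / partition of unity'' cannot be carried out as stated. Lie algebra cohomology of $C^\infty(M)$ or of $\Omega^1(M)/\delta\Omega^2(M)$ is not a sheaf-theoretic invariant: there is no Mayer--Vietoris sequence, and primitives found on the subalgebras attached to charts cannot be patched by a partition of unity (cochains are multilinear, not $C^\infty(M)$-linear, in their arguments). Worse, the theorem as stated carries no continuity hypothesis, and an \emph{algebraic} $2$-cocycle is a priori not local: it need not be a bidifferential operator, can couple the behaviour of its two arguments at far-apart points, and is not determined by its restrictions to chart subalgebras, so Peetre-- or Gelfand--Fuks--type diagonal arguments are not available for free. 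Manufacturing enough locality/continuity for arbitrary cocycles (exploiting, e.g., that compactly supported functions on opens form ideals of the Poisson algebra) and then reducing to a computable situation is precisely the hard technical work that fills the cited reference; your plan names this difficulty but does not resolve it. Finally, your duality step contains a directional error: universal coefficients gives $H^2(C^\infty(M))\cong H_1^{\rm can}(M)^*\cong\bigl(H^{2n-1}_{\rm dR}(M)\bigr)^*$, but Poincar\'e duality on a non-compact manifold reads $H^{2n-1}_{\rm dR}(M)\cong\bigl(H^1_{{\rm dR},c}(M)\bigr)^*$, the opposite direction from the one you invoke, and $\bigl(H^1_{{\rm dR},c}\bigr)^{**}\cong H^1_{{\rm dR},c}$ fails whenever this space is infinite-dimensional. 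So your chain of isomorphisms only closes up under finite-type hypotheses, or else in the continuous category --- and your parenthetical ``continuous duals throughout'' is incompatible with the purely algebraic recognition framework of (i)--(iv) unless one additionally proves that the relevant $2$-cocycles are automatically (cohomologous to) continuous ones, which is again part of the content of \cite{MR3556431}.
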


\section{$L_\infty$-algebras}

In this section we will recall the necessary notions regarding $L_\infty$ structures, boil them down to the case that interests us and explore the example that inspired the problem treated in this paper.

\subsection{$L_\infty$-algebras and their morphisms}

\begin{Definition}[\cite{zbMATH00465568}]
An \textit{$L_\infty$-algebra} (or \textit{Lie-$\infty$-algebra}) is a graded vector space $L= \bigoplus_{i\in \mathbb Z}L_i$ together with a family of graded skew-symmetric multilinear maps $\{l_k:\bigwedge^k L\to L~|~k\in \mathbb N\}$ such that $l_k$ has degree $2{-}k$ and the following identity holds
\begin{align}\label{linfy}
	\sum_{i+j=n+1} (-1)^{i(j+1)}\sum_{\sigma\in {\rm ush}(i,n-i)}&{\rm sgn}(\sigma)\epsilon(\sigma;x_1,...,x_n)\times \\~& l_j(l_i(x_{\sigma(1)},..., x_{\sigma(i)}), x_{\sigma(i+1)}...,x_{\sigma(n)})=0\nonumber
\end{align}
for all $n\in\mathbb N$, where $\epsilon(\sigma;x_1,...,x_n)$ denotes the Koszul sign of $\sigma$ acting on the elements $x_1,...,x_n$ and ${\rm ush}(i,n{-}i)\subset S_n$ denotes the space of all $(i,n{-}i)$-unshuffles.
\end{Definition}

The notion of $L_\infty$-algebras is best understood as ``(differential graded) Lie algebras up to homotopy''. This can be best seen by looking at the $n=3$ term of the defining equation. Writing $d$ for $l_1$ and $[\cdot, \cdot]$ for $l_2$, it has the following form:

\begin{align}\label{ex:linfty}
\pm [[a,b],c]\pm [[a,c],b]&\pm [[b,c],a]\\ \nonumber%
&\pm l_3(da, b,c)\pm l_3(db, a,c)\pm l_3(dc, a,b) \pm dl_3(a,b,c)=0
\end{align}

The three leftmost terms correspond to the (graded) Jacobi identity, and the remaining four terms involve the homotopical (or homological) error of the Jacobi identity, with $d=l_1$ as differential and $l_3$ the term quantifying the error. While the $n=1$ identity assures that $l_1=d$ squares to zero and the $n=2$ identity signifies the compatibility between $d$ and $l_2$, the $n=4$ identities sets up a Jacobi-like rule for the ``homotopical error term'' $l_3$ up to some higher error $l_4$ and so on.\\

An $L_\infty$-algebra can be equivalently described as a coderivation squaring to zero on the symmetric co-algebra of a graded vector space \cite{zbMATH00465568, MR1327129}. Retranslating the notion of morphism of such co-algebras, one arrives at the following definition of morphism for $L_\infty$-algebras:

\begin{Definition}[\cite{zbMATH06043075, zbMATH06568059}]\label{weakmorph}
An \textit{$L_\infty$-morphism} from $(L,l_k)$ to $(L',l_k')$ is a family $\{f_k: \Lambda^kL\to L']$ of graded skew-symmetric maps of degrees $1{-}k$ satisfying the following condition for $n\geq 1$:

\begin{align*}
&\sum_{i+j=n+1} \sum_{\sigma\in {\rm ush}(i,n-i)}(-1)^{i(j+1)} {\rm sgn} (\sigma)\epsilon(\sigma;x_1,...,x_n) ~\times \\
& ~~~~~~~~~~~~~~~~~~~~~~~~~~~~~~~~~~~~~~~~~~~~~~~~~~~~~~~~~~ f_{j}\left(
 l_i(x_{\sigma(1)},..., x_{\sigma(i)}), x_{\sigma(i+1)},..., x_{\sigma(n)}
\right)\\
&=\sum_{p=1}^n
\sum_{\substack{\sum_{j=1}^p k_j=n\\ k_i\leq k_{i+1}}}
\sum_{\substack{\sigma\in {\rm ush}(k_1,...,k_p)\\ \sigma(\sum_{i=1}^{j-1}k_i+1)<\sigma(\sum_{i=1}^{j}k_i+1)\\ \text{whenever } k_j=k_{j+1}}}
(-1)^\beta {\rm sgn} (\sigma)\epsilon(\sigma;x_1,...,x_n)~~~~\times \\
& l_p'(
 f_{k_1}(x_{\sigma(1)},..., x_{\sigma(k_1)}) 
  ,  f_{k_2}(x_{\sigma(k_1+1)},..., x_{\sigma(k_1+k_2)}), ...,  f_{k_p}(x_{\sigma(n-k_p+1)},..., x_{\sigma(n)}) ),
\end{align*}
where $\beta$ is given by the following formula:
\begin{align*}
\beta=\frac{p(p-1)}{2}+&\sum_{i=1}^{p}k_i(p-i)+ (k_p-1)\sum_{i=1}^{n-k_p}|x_{\sigma(i)}|+\\ &(k_{p-1}-1)\sum_{i=1}^{n-(k_p+k_{p-1})}|x_{\sigma(i)}|+...+ (k_2-1)\sum_{i=1}^{n-(k_p+k_{p-1}+...+ k_2)}|x_{\sigma(i)}| .
\end{align*}
\end{Definition}

The complicated sign stems from the fact that there is a grading shift in between the anti-symmetric multi-bracket and the symmetric co-algebra perspectives. The indices of the sums are there to ensure
that each of the possible combinations of multibrackets $l_k,l_k'$ and morphism components $f_k$ are applied to all inequivalent permutations of the $x_i$. Again, $f_1$ should be considered as the principal component of the morphism and the higher $(f_k)_{k \geq 2}$ should be seen as higher homotopical corrections. When these corrections are zero, we call a morphism strict:
\begin{Definition}
An {$L_\infty$-morphism}  $\{f_k\}$, where $f_k=0$ for $k\geq 2$ is called \textit{strict $L_\infty$-morphism}. 
\end{Definition}

\subsection{Grounded $L_\infty$-algebras}
In \cite{zbMATH01213866}, a construction procedure for $L_\infty$-algebras is presented. Starting from a Lie algebra $\mathcal F$ and a homological resolution of modules 
\begin{align} \label{stashres}
    ... \to X_{-2}\overset{d}\to X_{-1}\overset{d}\to X_0\overset{\rho}\to \mathcal F
\end{align}
they construct an $L_\infty$-algebra structure on the resolution $X_\bullet$ with $l_1=d$. In advantageous cases, where the Lie bracket on $\mathcal F$ can be lifted to a skew-symmetric bracket on $X_0$ that vanishes on exact terms (i.e. on $dX_{-1}$), the $L_\infty$-structure they construct has a very specific form: All the higher brackets $\{l_i\}_{i\geq 2}$ are only non-trivial on $X_0$. Following \cite{MR3353735}, we will call such $L_\infty$-algebras grounded.

\begin{Definition}
An $L_\infty$-algebra $(L,l_k)$ is called \emph{grounded} if 
\begin{enumerate}
    \item it is non-positively graded, i.e. $L=\bigoplus_{i=0}^\infty L_{-i}$,
    \item $ l_k(x_1,...,x_k)$ is zero whenever $k>1$ and $\sum_{i=1}^k|x_i|\neq 0$, where $ |x_i|$ denotes the degree of $x_i$,
    \item $ l_k(x_1,...,x_k)$ is zero whenever $x_1=d\alpha$ for some $\alpha \in L_{-1}$.
\end{enumerate}

\end{Definition}

As for a grounded $L_\infty$-algebra most terms in the multi-bracket equation \eqref{linfy} vanish, it can be described in a simpler manner than a general $L_\infty$-algebra.  Grounded $L_\infty$-algebras have been around for a long time. Explicit investigations can be found in \cite{zbMATH06636661}, we refer to \cite{zbMATH06568059} for an elementary account.

\begin{Lemma}
A grounded $L_\infty$-algebra can be equivalently described as a cochain complex $\left(\bigoplus_{i=0}^\infty L_{-i}~,~ l_1\right)$ with a family of linear maps $\{l_k:\Lambda^k L_0\to L_{2-k}\}$ satisfying for $k\geq 2$:
\begin{itemize}
    \item $l_k(l_1(\alpha),x_2,...,x_k)=0$ for all $\alpha\in L_{-1}$ and $x_2,...,x_k\in L_0$
    \item $\partial_{l_2} l_{k}=l_1 l_{k+1}$.
\end{itemize}

\end{Lemma}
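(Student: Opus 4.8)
The plan is to prove the stated equivalence by specialising the defining $L_\infty$-relation \eqref{linfy} to homogeneous inputs and tracking which summands survive the three grounding conditions. By multilinearity it suffices to test \eqref{linfy} on homogeneous elements $x_1,\dots,x_n$, and since the grading is non-positive the only relevant invariant is the total input degree $S:=\sum_a |x_a|\le 0$. Conditions (1)--(2) immediately concentrate the higher brackets where the lemma claims: for $k\ge 2$ a bracket $l_k(x_1,\dots,x_k)$ can be non-zero only if $\sum_a|x_a|=0$, which forces every $x_a\in L_0$, and then the value has degree $2-k$, so $l_k$ is determined by its restriction $\Lambda^k L_0\to L_{2-k}$; meanwhile $l_1=d$ makes $(\bigoplus_i L_{-i},l_1)$ a cochain complex. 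Thus in both directions the data of a grounded $L_\infty$-algebra reduces to a complex together with maps $l_k\colon\Lambda^k L_0\to L_{2-k}$, and the whole content is which of the relations \eqref{linfy} remain non-trivial.

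First I would record the decisive bookkeeping step. In a summand $l_j(l_i(x_{\sigma(1)},\dots,x_{\sigma(i)}),x_{\sigma(i+1)},\dots,x_{\sigma(n)})$ with outer arity $j\ge 2$, condition (2) requires the total degree $S+(2-i)$ of the outer arguments to vanish, so such a summand survives only when $i=S+2$. This already dispatches every mixed-degree relation: for $S\le -2$ there is no admissible $i\ge 1$, and the outer-$l_1$ summand $d\,l_n(x_1,\dots,x_n)$ also vanishes because $l_n$ would need all inputs in $L_0$, whence the relation is $0=0$; and for $n=1$ the relation reduces to $l_1^2=0$, which is exactly the complex condition.

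Next I would treat the two surviving regimes. For $S=-1$ the only admissible inner arity is $i=1$, giving terms $l_n(d x_{\sigma(1)},\dots)$; such a term is non-zero only when $x_{\sigma(1)}$ is the unique degree-$(-1)$ input $\alpha$, and then $d x_{\sigma(1)}=d\alpha\in dL_{-1}$, so the relation is precisely $l_k(l_1(\alpha),x_2,\dots,x_k)=0$, i.e. the first bullet (equivalently condition (3)). For $S=0$ all inputs lie in $L_0$, the $i=1$ terms drop out because $dx=0$ on $L_0$ (as $L_1=0$), and among $2\le i\le n-1$ only $i=2$ survives since $i=S+2=2$; together with the outer-$l_1$ case $i=n$ these are the only contributors. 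When $n=2$ the values $i=2$ and $i=n$ coincide and the relation lands in $L_1=0$, hence is vacuous. For $n\ge 3$ the $i=2$ summands, ranging over $\mathrm{ush}(2,n-2)$, assemble into the Chevalley--Eilenberg-type operator $\partial_{l_2}$ applied to $l_{n-1}$ for the bracket $l_2$ on $L_0$ (its module-action term survives only for $n=3$, producing the Jacobiator, and drops out for $n\ge 4$ because it would require the excluded inner arity $i=n-1$), while the lone $i=n$ summand is $l_1 l_n$. With $k=n-1\ge 2$ this is exactly the second bullet. Conversely, extending the given maps by zero on inputs of non-zero total degree produces a grounded $L_\infty$-algebra, since the identical case analysis shows every instance of \eqref{linfy} then reduces to $l_1^2=0$ or to one of the two bullets.

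The conceptual reduction above is clean; the main obstacle is the sign bookkeeping. I would need to check that the prefactor $(-1)^{i(j+1)}$ together with $\mathrm{sgn}(\sigma)$ and the Koszul sign $\epsilon(\sigma;x_1,\dots,x_n)$ recombine so that the surviving $i=2$ unshuffle sum is genuinely $\partial_{l_2}l_{n-1}$ with the normalisation that makes the identity read $\partial_{l_2}l_{n-1}=l_1 l_n$ on the nose rather than up to an overall sign, and likewise that for $S=-1$ the signs do not obstruct the clean vanishing statement. This is the only delicate point: once the sign convention for $\partial_{l_2}$ is fixed to be the one induced from \eqref{linfy}, the equivalence follows.
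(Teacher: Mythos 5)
Your structural reduction is correct, and there is in fact no in-paper argument to compare it against: the paper states this Lemma without proof, deferring to the cited literature, so the unwinding of \eqref{linfy} that you propose is exactly the argument being implicitly invoked. Your bookkeeping step is the right one: with all degrees non-positive, a summand $l_j(l_i(\cdots),\cdots)$ with outer arity $j\geq 2$ can survive only when $i=S+2$, which kills every relation with $S\leq -2$, leaves only the inner-$l_1$ terms when $S=-1$ (a single surviving term, since $l_1$ vanishes on $L_0$ because $L_1=0$, so its sign is irrelevant and the relation is the first bullet), and leaves only $i=2$ plus the outer-$l_1$ term when $S=0$; the converse direction by extension-by-zero follows from the same case analysis. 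This is a complete and correct skeleton.

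The one point you defer --- the signs in the $S=0$ case --- does work out, but your closing remark that one may ``fix the sign convention for $\partial_{l_2}$ to be the one induced from \eqref{linfy}'' is not available to you: the paper fixes $\partial_{l_2}$ independently in Definition~\ref{def:CE}, so compatibility must be verified, not stipulated. Fortunately the verification is short, precisely because all inputs lie in degree $0$: every Koszul sign $\epsilon(\sigma;x_1,\dots,x_n)$ equals $1$; the prefactor $(-1)^{i(j+1)}$ equals $+1$ both for $(i,j)=(2,n-1)$ and for $(i,j)=(n,1)$; and the $(2,n-2)$-unshuffle $\sigma$ with $\sigma(1)=a<b=\sigma(2)$ has $\mathrm{sgn}(\sigma)=(-1)^{(a-1)+(b-2)}=(-1)^{a+b+1}$. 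Hence the $i=2$ part of \eqref{linfy} is
\[
\sum_{a<b}(-1)^{a+b+1}\, l_{n-1}\bigl(l_2(x_a,x_b),x_1,\dots,\hat x_a,\dots,\hat x_b,\dots,x_n\bigr)
=-\,(\partial_{l_2}l_{n-1})(x_1,\dots,x_n),
\]
while the $(i,j)=(n,1)$ part is $+\,l_1 l_n(x_1,\dots,x_n)$, so the relation reads $\partial_{l_2}l_{n-1}=l_1 l_n$ on the nose, i.e. the second bullet with $k=n-1$, with no stray overall sign. With this computation inserted, your proposal is a complete proof.
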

The $\partial_{l_2}$ in the above Lemma generalizes the Chevalley-Eilenberg differential in Lie algebra cohomology (and might not square to zero when $l_2$ is not an honest Lie bracket):

\begin{Definition}\label{def:CE}
Let $L, K$ be vector spaces and let $B:\Lambda^2L\to L$ be a skew-symmetric map. The operator $\partial_B: {\rm Hom}(\Lambda^p L,K)\to {\rm Hom}(\Lambda^{p+1}L,K)$ is defined as follows
\begin{align*}
(\partial_B f)(x_1,...,x_{p+1})=\sum_{1\leq i<j\leq p+1} (-1)^{i+j}f(B(x_i, x_j), x_1,...,\hat x_i,...,\hat x_j,...,x_{p+1}).
\end{align*}
where the notation ${\rm Hom}(\Lambda^pL,K)$ denotes the vector space of $p$-multilinear skew-symmetric maps from $\varprod^pL$ to $K$. 
\end{Definition}

To visualize how this reduced definition simplifies the equation, one can look at equation \eqref{ex:linfty} and observe that only the three leftmost and the rightmost term survive, i.e. one gets ``the Jacobi identity up to $d$ of $l_3$''. We end this subsection by remarking that $L_\infty$- morphisms from grounded $L_\infty$-algebras to Lie algebras look quite simple:
 
\begin{Remark}\label{rema}
 Let $(L,l_i)$ be a grounded $L_\infty$-algebra and $(\mathfrak g,[\cdot,\cdot])$ a Lie algebra. Then a morphism from $(L,l_i)$ to $(\mathfrak g,[\cdot,\cdot])$ is equivalently given by a map $f_1:L_0\to \mathfrak g$ that satisfies 
 \begin{align}f_1 l_2(\cdot,\cdot)=[f_1\cdot,f_1\cdot],\end{align} 
 hence it is automatically strict.
\end{Remark}

\begin{Remark}\label{rem:centralext}
The notion of being grounded is not homotopy invariant, however it appears very naturally for $L_\infty$-algebras that are (higher) central extensions of Lie algebras (by differential graded vector spaces), as discussed in \cite{zbMATH06416957} for the multisymplectic $L_\infty$-algebra. \\

On the one hand, for a central extension $C\to L\to \mathfrak g$, where $\mathfrak g$ is a Lie algebra and $C$ is a  differential graded vector space in non-positive degree, the groundedness of $L$ follows directly from the centrality of $C$. On the other hand, by dividing any grounded $L_\infty$-algebra $L$ by its center $C$ we obtain a Lie algebra $\mathfrak g$ and a central extension $C \to L\to \mathfrak g$.\\

Given such a central extension, we can encode the bracket structure on $L$ by an $L_\infty$-morphism $f$ from $\mathfrak g$ to $\hat L$, where $\hat L$ is the abelian $L_\infty$-algebra on the augmented complex $L[-1]\oplus \mathfrak g$, with the last differential given by the projection from $L_0$ to $\mathfrak g$. Such an $L_\infty$-morphism from a Lie algebra to a graded vector space has to satisfy the identity $$\partial_{[\cdot, \cdot]}f_k=-df_{k+1}$$ To achieve this, we simply set $f_1=\textrm{id}_{\mathfrak{g}}$ and $f_k(\xi_1,...,\xi_k)=(-1)^{k+1} l_k(\alpha_1,...,\alpha_k)\in L_{2-k}=\hat L_{1-k}$, where $\alpha_i\in L_0=\hat L_{-1}$ are any elements projecting to $\xi_i\in\mathfrak g$. Conversely, any $L_\infty$-morphism $\mathfrak g\to \hat L$ starting with the identity corresponds to a grounded $L_\infty$-structure on $L$.\\

In the same vein, one can show (cf.\
\cite[Theorem 3.8]{Lazarev2014}) that equivalence classes of central extensions of $\mathfrak{g}$ by $C$ are classified by $H^2_{\mathrm{CE}}(\mathfrak{g},C)$, the cohomology in degree 2 of the Chevalley-Eilenberg complex $C_{\mathrm{CE}}(\mathfrak{g}, C) :=  \wedge \mathfrak{g}^*\otimes C$ with differential 
$\partial_{[\,\cdot\,,\, \cdot\,]} + d$.
A linear splitting $\sigma=(\sigma_1,0,\dots)$ with $\sigma_1 \colon \mathfrak{g} \rightarrow L_0$ 
gives rise to a 2-cocycle $g=(g_2,g_3,\dots)$ with components $g_k \colon \wedge^{k}{\mathfrak{g}} \rightarrow C_{2-k}$ given by $g_k = f_k$ for $k > 2$, and
\[g_2(\xi_1, \xi_2) :=
\sigma_1([\xi_1,\xi_2]) - l_2(\sigma_1(\xi_1), \sigma_1(\xi_2))
\]
for $k=2$. Its class $[g]\in H^2_{\mathrm{CE}}(\mathfrak{g},C)$ does not depend on the choice of section, and $g$ is the boundary of a 1-cochain $h=(h_1,h_2,\dots)$ if and only if $\sigma_1 + h_1 \colon \mathfrak{g} \rightarrow L_0$ and $h_k \colon \wedge^k\mathfrak{g} \rightarrow L_{1-k}$ are the components of
a (weak) $L_{\infty}$-morphism from $\mathfrak{g}$ to $L$.\\

Although we will stick to the perspective of grounded $L_{\infty}$-algebras, the problems we encounter could just as easily be recast in terms of 
(homotopy classes of) $L_{\infty}$-morphisms
from $\mathfrak{g}$ to $\hat{L}$, or (equivalence classes of)
cochains in the Chevalley-Eilenberg complex
$C_{\mathrm{CE}}(\mathfrak{g},C)$.
\end{Remark}

\subsection{Motivating example: The $L_\infty$-algebra of a multisymplectic manifold}\label{subs:rogers}
In \cite{zbMATH06046103}, an $L_\infty$-algebra is constructed for any multisymplectic manifold - i.e. a manifold $M$ equipped with a closed and non-degenerate differential form $\eta\in\Omega^n(M)$, where non-degeneracy means that $\iota_\bullet\eta:TM\to \Lambda^{n-1}T^*M$ is injective. We denote this $L_{\infty}$-algebra by $L_\infty(M,\eta)$. 

In the case of an $n$-dimensional manifold $M$, $\eta=\mu$ is simply a volume form.
Each $\alpha\in \Omega^{n-2}(M)$ determines a unique vector field $X_\alpha$ that satisfies $d\alpha=-\iota_{X_\alpha}\mu$, called an exact divergence free vector field.
The $L_\infty$-algebra $L_{\infty}(M,\mu)$
takes the following form:

\begin{Definition}[\cite{zbMATH06046103}] Let $M$ be an $n$-dimensional manifold and $\omega$ a volume form. Then $L_\infty(M,\omega)=(L,l_k)$ is the grounded $L_\infty$-algebra defined as follows:
\begin{itemize}
    \item the spaces $L_{-i}=\Omega^{n-2-i}(M)$ for $i\in\{0,...,n-2\}$
    \item the unary bracket $l_1=d$
    \item the higher brackets $l_k(\alpha_1,...,\alpha_k)=-(-1)^{\frac{k(k+1)}{2}}\iota_{X_{\alpha_k}}\dots\iota_{X_{\alpha_1}}\mu $  for $k\in \{2,...,n\}$, where all $\alpha_i\in L_0=\Omega^{n-2}(M)$.
\end{itemize}
Thus the underlying complex is the truncated complex of differential forms on $M$
\begin{align*}
\Omega^0(M)\stackrel{d}{\rightarrow}\Omega^1(M)\dots\stackrel{d}{\rightarrow}\Omega^{n-2}(M).
\end{align*}
\end{Definition}

The association $\alpha\mapsto X_\alpha$ from $L_0\subset L_\infty(M,\mu)$ to the Lie algebra $\mathfrak X(M)$ actually defines an $L_\infty$-morphism with image the Lie algebra $\mathfrak X_{{\rm ex}}(M,\mu)$ of exact divergence free vector fields,
leading to a sequence which locally looks very similar to \eqref{stashres}, but 
has some cohomology on the global scale. By construction we have:

\begin{Proposition}
If $M$ is compact, then ${L_0}/{dL_{-1}}=\Omega^{n-2}(M)/d\Omega^{n-3}(M)$ is a central extension of the Lie algebra $\mathfrak X_{{\rm ex}}(M,\mu)$. Moreover, there is a natural $L_\infty$-surjection from $L_\infty(M,\mu)$ to the Lie algebra $\Omega^{n-2}(M)/d\Omega^{n-3}(M)$.
\end{Proposition}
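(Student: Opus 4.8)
The first assertion is essentially a restatement of Theorem~\ref{thm:vol}. I would begin by recording that the bracket on $\Omega^{n-2}(M)/d\Omega^{n-3}(M)$ is the one introduced in the preceding subsection, $[[\alpha],[\beta]]=[\iota_{X_\alpha}\iota_{X_\beta}\mu]$, and that the projection $[\alpha]\mapsto X_\alpha$ is a well-defined Lie algebra epimorphism onto $\mathfrak X_{\rm ex}(M,\mu)$. Its kernel consists of the classes $[\alpha]$ with $X_\alpha=0$, i.e.\ with $d\alpha=0$; by compactness these are exactly the classes of closed $(n-2)$-forms modulo exact ones, giving $H^{n-2}_{\rm dR}(M)$. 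This kernel is central because $X_\alpha=0$ forces $\iota_{X_\alpha}\iota_{X_\beta}\mu=0$, so its bracket with any $[\beta]$ vanishes. This is precisely the Lichnerowicz central extension of Theorem~\ref{thm:vol}.

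For the second assertion I would view the target Lie algebra $\mathfrak g:=\Omega^{n-2}(M)/d\Omega^{n-3}(M)$ as an $L_\infty$-algebra concentrated in degree $0$, and invoke Remark~\ref{rema}. Since $L_\infty(M,\mu)$ is grounded and $\mathfrak g$ sits in degree $0$, any $L_\infty$-morphism into $\mathfrak g$ reduces to a single linear map $f_1\colon L_0\to\mathfrak g$ satisfying $f_1\circ l_2=[\,f_1(\cdot),f_1(\cdot)\,]$, and it is automatically strict: a higher component $f_k$ with $k\ge 2$ has degree $1-k$ and would need a source in positive degree to hit degree $0$, which is impossible because $L$ is non-positively graded. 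The natural candidate is $f_1=\pi$, the canonical projection $\Omega^{n-2}(M)\to\Omega^{n-2}(M)/d\Omega^{n-3}(M)$.

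It then remains to check the single bracket-compatibility equation $\pi\circ l_2=[\,\pi(\cdot),\pi(\cdot)\,]$. Evaluating the defining formula at $k=2$ gives $l_2(\alpha,\beta)=\iota_{X_\beta}\iota_{X_\alpha}\mu$, whose class is $[[\alpha],[\beta]]$ up to the order of the two contractions; matching this against the quotient bracket is then a direct computation, the only subtlety being the contraction-order sign fixed by the paper's conventions. Surjectivity is immediate since $\pi$ is a quotient projection, so $f_1=\pi$ is the desired $L_\infty$-surjection onto $\mathfrak g$.

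The proof has no serious obstacle: as the text indicates, it holds essentially ``by construction''. The only genuine inputs are external, namely the well-definedness of the quotient bracket and the identification of the kernel with $H^{n-2}_{\rm dR}(M)$, both from Theorem~\ref{thm:vol}, together with the reduction of $L_\infty$-morphisms into a degree-$0$ Lie algebra to a single strict map, from Remark~\ref{rema}. The one place demanding real attention is the sign bookkeeping in the $l_2$-versus-bracket comparison and, implicitly, the verification that $\iota_{X_\alpha}\iota_{X_\beta}\mu$ is insensitive to the choice of representatives $\alpha,\beta$, so that $\pi\circ l_2$ descends correctly to the quotient.
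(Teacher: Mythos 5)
Your proposal is correct and follows essentially the same route the paper intends: the paper gives no separate argument (the Proposition is prefaced by ``By construction''), the first claim resting on Theorem~\ref{thm:vol} and the well-definedness of the quotient bracket, and the second on Remark~\ref{rema} applied to the quotient projection $f_1=\pi$, which is exactly what you spell out, including the degree argument forcing strictness. The one loose end you flag---comparing $l_2(\alpha,\beta)=\iota_{X_\beta}\iota_{X_\alpha}\mu$ with the quotient bracket $[\iota_{X_\alpha}\iota_{X_\beta}\mu]$ up to contraction order---is a sign-convention wrinkle already present in the paper itself and does not affect the validity of your argument.
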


\begin{Remark}
 Of course, a compactly supported version of $L_\infty(M,\mu)$ can be defined to treat the case of a non-compact symplectic manifold $M$.
\end{Remark}

We have thus found the $L_\infty$-algebra behind the universal central extension of the Lie algebra of exact divergence free vector fields (Theorem \ref{thm:vol}). The objective of the next section and of this paper is to find the $L_\infty$-algebra behind the universal central extension of the Lie algebra of Hamiltonian vector fields.

\section{The $L_\infty$-algebra of a symplectic manifold}
The goal of this section is to find the $L_\infty$-algebra behind the Lie algebra ${\Omega^1(M)}/{\delta\Omega^2(M)}$, which, when $M$ is compact, is the universal central extension of $\mathfrak X_{Ham}(M,\omega)$ 
(Theorem \ref{thm:symp}).

\subsection{The underlying complex and the bracket structure}
Following the intuition from Subsection \ref{subs:rogers}, we will try to construct a grounded $L_\infty$-algebra. As a starting point, we will choose the underlying complex to be $\Omega(M)$ with the Koszul differential $\delta$:
\begin{align}\label{eq:dgvs}
    &L_{-i}= \Omega^{i+1}(M)\mathrm{~~~~~for~}i\in\{0,...,2n-1\}\\
    &l_1=\delta\nonumber
\end{align}
Of course, the bracket $l_2:\Lambda^2L_0\to L_0$ should project to the bracket of ${\Omega^1(M)}/{\delta\Omega^2(M)}=L_0/\delta L_{-1}$ (see Remark \ref{rema}). Due to groundedness, the higher brackets $l_k(x_1,...,x_k)$ are only non-trivial for $x_i\in L_0=\Omega^1(M)$, moreover - in analogy to the divergence-free case - we will assume them to depend only on $\delta(x_1),....,\delta(x_k)\in C^\infty(M)$. Hence, we try to construct maps $\tilde l_k:\Lambda^kC^\infty(M)\to L_{2-k}=\Omega^{k-1}(M)$ satisfying 
\begin{align}\label{eq:tildelk}
\partial_{\{\cdot,\cdot\}}\tilde l_k= \delta \tilde l_{k+1}
\end{align}
where $\partial_{\{\cdot,\cdot\}}$ is the Chevalley-Eilenberg differential of the Lie algebra $C^\infty(M)$ as defined in Definition \ref{def:CE}. The maps $l_k$ defined by
\begin{align}\label{eq:lkdef}
    l_k(x_1,...,x_k)
    =\tilde l_k(\delta x_1,...,\delta x_k)
\end{align}
then form the desired $L_\infty$-structure. The discussion can be summarised as follows: 
\begin{Lemma}
Let $\tilde l_k:\Lambda^k C^\infty(M)\to L_{2-k}$ for $k\in\{2,...,2n+1\}$ satisfy equation \eqref{eq:tildelk} and
\begin{align*}\tilde l_2(f,g)= fd g \mod \delta\Omega^2(M).\end{align*} 
Then $L_i$ and $l_1$ as in \eqref{eq:dgvs} and $l_k$ as in \eqref{eq:lkdef} define a grounded $L_\infty$-algebra with a natural $L_\infty$-surjection to the Lie algebra ${\Omega^1(M)}/{\delta\Omega^2(M)}$.
\end{Lemma}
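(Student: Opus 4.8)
The plan is to verify, from the reduced characterisation of grounded $L_\infty$-algebras in the Lemma above, the two structural conditions on $(L_i, l_1, l_k)$, and then to produce the surjection via Remark \ref{rema}. The underlying data is already a cochain complex, since $l_1 = \delta$ is the Koszul differential and satisfies $\delta^2 = 0$; this also immediately settles the first grounded condition $l_k(l_1\alpha, x_2, \ldots, x_k) = 0$. Indeed, by \eqref{eq:lkdef} the bracket $l_k(x_1,\ldots,x_k) = \tilde l_k(\delta x_1, \ldots, \delta x_k)$ factors through $\delta$, so substituting $x_1 = \delta\alpha$ with $\alpha \in L_{-1} = \Omega^2(M)$ inserts $\delta^2\alpha = 0$ into the first slot of $\tilde l_k$, and the expression vanishes by multilinearity.

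The heart of the argument is the remaining condition $\partial_{l_2} l_k = l_1 l_{k+1}$, which I would reduce to the hypothesis \eqref{eq:tildelk} on the $\tilde l_k$. First I would record the key identity $\delta\,\tilde l_2(f,g) = \{f,g\}$: since $\tilde l_2(f,g) = f\,dg \bmod \delta\Omega^2(M)$ and $\delta$ annihilates $\delta\Omega^2(M)$ by $\delta^2=0$, it suffices to compute $\delta(f\,dg) = \iota_\pi d(f\,dg) = \iota_\pi(df\wedge dg) = \pi(df,dg) = \{f,g\}$, using that $\iota_\pi$ vanishes on $1$-forms for degree reasons. In particular $\delta\,l_2(x,y) = \{\delta x, \delta y\}$. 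Feeding this into the definition of $\partial_{l_2}$ from Definition \ref{def:CE} and writing $f_m := \delta x_m$, each term of $(\partial_{l_2} l_k)(x_1,\ldots,x_{k+1})$ of the form $l_k(l_2(x_i,x_j),\ldots)$ becomes $\tilde l_k(\{f_i,f_j\}, f_1, \ldots, \widehat{f_i}, \ldots, \widehat{f_j}, \ldots)$, so that $\partial_{l_2} l_k = (\partial_{\{\cdot,\cdot\}}\tilde l_k)\circ(\delta\times\cdots\times\delta)$. Applying \eqref{eq:tildelk} then gives $\partial_{\{\cdot,\cdot\}}\tilde l_k = \delta\,\tilde l_{k+1}$, and re-factoring through $\delta$ yields $\partial_{l_2} l_k = \delta\,l_{k+1} = l_1 l_{k+1}$, as required. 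For the top degree $k = 2n+1$ one reads \eqref{eq:tildelk} with the convention $\tilde l_{2n+2} = 0$, which is forced since $L_{2-(2n+2)} = \Omega^{2n+1}(M) = 0$.

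For the surjection I would invoke Remark \ref{rema}: a morphism from the grounded $L_\infty$-algebra $L$ to the Lie algebra $\Omega^1(M)/\delta\Omega^2(M)$ is determined by a single map $f_1 \colon L_0 = \Omega^1(M) \to \Omega^1(M)/\delta\Omega^2(M)$ satisfying $f_1 l_2(\cdot,\cdot) = [f_1\cdot, f_1\cdot]$, and is then automatically strict. Taking $f_1$ to be the quotient projection, this identity reads $[\tilde l_2(\delta x, \delta y)] = [[x],[y]]$; since $\tilde l_2(\delta x,\delta y) = \delta x\, d\delta y \bmod \delta\Omega^2(M)$ and the bracket on the quotient is $[[x],[y]] = [\delta x\cdot d\delta y]$, both sides agree. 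As the projection is surjective, this is the desired $L_\infty$-surjection; compatibility with $l_1$ is automatic, as $f_1 l_1(\alpha) = [\delta\alpha] = 0$ for $\alpha \in L_{-1}$.

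The step I expect to require the most care is the bookkeeping in the second paragraph: matching the grounded structure equation written for the brackets $l_k$ on $L_0$ with the equation \eqref{eq:tildelk} written for the $\tilde l_k$ on $C^\infty(M)$, and in particular checking that the sign-carrying combinatorial terms of $\partial_{l_2}$ and of $\partial_{\{\cdot,\cdot\}}$ correspond termwise under the factorisation through $\delta$. The geometric input — the identity $\delta(f\,dg)=\{f,g\}$ — is elementary, so the content of this Lemma is essentially this translation, with the genuine construction of maps $\tilde l_k$ satisfying \eqref{eq:tildelk} deferred to Theorem \ref{thm:main}.
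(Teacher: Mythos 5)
Your proposal is correct and follows essentially the same route as the paper, which states this Lemma as a summary of the preceding discussion: the reduced characterisation of grounded $L_\infty$-algebras, the factorisation of the brackets through $\delta$ together with the identity $\delta(f\,dg)=\{f,g\}$ (so that $\partial_{l_2}l_k$ becomes $\partial_{\{\cdot,\cdot\}}\tilde l_k$ composed with $\delta$ in each slot), and Remark \ref{rema} for the strict surjection onto $\Omega^1(M)/\delta\Omega^2(M)$. Your write-up merely makes explicit the details the paper leaves implicit (the top-degree convention $\tilde l_{2n+2}=0$ and the chain-map condition $f_1 l_1=0$), and these checks are all correct.
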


\begin{Remark}
From the perspective of Remark \ref{rem:centralext}, what we are trying to do is to find a cocycle of $C^\infty(M)$ with values in the canonical complex $(\Omega^\bullet(M),\delta)$ that starts with $\textrm{id}_{C^\infty(M)}$ and $((f,g)\mapsto \frac{1}{2}(f dg-gdf))$.
\end{Remark}

\subsection{A first Ansatz for the higher brackets}
The naive definition of $\tilde l_2$ would be the map $(f,g)\mapsto fdg$. This map however is not skew-symmetric (only skew-symmetric up to elements in $\delta\Omega^2(M)$), so we have to skew-symmetrize it, leading to 
\[
\tilde l_2(f,g)=\frac{1}{2}(fdg-gdf).
\]
Trying to generalize this, we could look at the maps $m_k:\bigotimes^k C^\infty(M)\to \Omega^{k-1}(M)$ given by
\[m_k(f_1,...,f_k)=f_1df_2\wedge ...\wedge df_k.\]
Their antisymmetrization is  
\[{\rm Alt}(m_k)(f_1,\dots,f_k)=\frac{1}{k}\sum_{i=1}^{k}(-1)^{i+1}f_idf_1\wedge...\widehat{df_i} ...\wedge df_k.\]
Here, 
we denote by ${\rm Alt}(T)$ the antisymmetrization map 
$${\rm Alt}(T)(f_1,...,f_k)=\frac{1}{k!}\sum_{\sigma\in S_k} {\rm sgn}(\sigma) T(f_{\sigma(1)},...,f_{\sigma(k)}).$$
At this point we already point out that $d \circ {\rm Alt}(m_k)=dm_k$, as $[d,Alt]=0$ and $dm_k$ skew-symmetric.

\subsection{The fundamental relations of the ${\rm Alt}(m_k)$}
Our idea is that the ${\rm Alt}(m_k)$ are essentially the $\tilde l_k$. To access that, we need to compare $\partial_{\{\cdot,\cdot\}} {\rm Alt}(m_k)$ with $\delta {\rm Alt}(m_{k+1})$. For this, the following overview of operators on $\Omega^\bullet (M)$ will be useful.

\begin{Lemma}[\cite{zbMATH00915230} with sign conventions from \cite{zbMATH04032666}]
Let $M$ be a 2n-dimensional manifold and $\omega$ a symplectic form. We denote its Poisson bivector by $\pi$. We write $L=\omega\wedge$ and $\Lambda= \iota_\pi$. We denote by $H:\Omega^k(M)\to \Omega^k(M)$ the degree counting operator $\alpha\mapsto (n-{\rm deg}(\alpha))\alpha$. For the Koszul differential $\delta=[\Lambda, d]=\Lambda d- d\Lambda$ we have the following relations
\begin{align*}
 [\Lambda,L]=H&& [H,\Lambda]=2\Lambda&&[H,L]=-2L\\  
	[L,d]=0 && [\Lambda,d]=\delta&&[H,d]=-d\\
	 [\Lambda,\delta]=0&& [L,\delta]=d&&[H,\delta]=\delta
\end{align*}
Furthermore $\delta d=-d\delta$ commutes with $H,L, \Lambda$.
\begin{Remark}
In particular, $\Omega^{\bullet}(M)$ carries a representation of the super Lie algebra $\mathfrak{sl}(2,\mathbb{R}) \ltimes \mathbb{R}^2$,
with even part $\mathfrak{sl}(2,\mathbb{R})$ spanned by $\Lambda$, $L$ and $H$, and with odd part the abelian super Lie algebra $\mathbb{R}^2$ spanned by $\delta$ and $d$. The decomposition of $\Omega^{\bullet}(M)$
into indecomposable representations was used extensively in \cite{Mathieu1995}.
\end{Remark}
\end{Lemma}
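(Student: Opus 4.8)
The plan is to sort the nine commutator relations into those that are pure bookkeeping and the two that carry genuine geometric content, and to prove the easy ones first. First I would dispose of every relation involving $H$ by degree counting: since $H$ acts on $\Omega^k(M)$ as multiplication by $n-k$, and since $\Lambda$, $L$, $d$, $\delta$ shift the form-degree by $-2$, $+2$, $+1$, $-1$ respectively, one reads off at once that $[H,\Lambda]=2\Lambda$, $[H,L]=-2L$, $[H,d]=-d$ and $[H,\delta]=\delta$, and likewise that the degree-$0$ operator $\delta d$ commutes with $H$. Next, $[L,d]=0$ is exactly the closedness $d\omega=0$ together with the fact that $\omega$ has even degree, and $[\Lambda,d]=\delta$ is the definition of $\delta$, so there is nothing to prove there.

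The relation $[L,\delta]=d$ I would obtain formally from the Jacobi identity for operator commutators rather than by hand: writing $\delta=[\Lambda,d]$,
\[
[L,\delta]=[L,[\Lambda,d]]=[[L,\Lambda],d]+[\Lambda,[L,d]]=[-H,d]+0=-[H,d]=d,
\]
which uses only $[\Lambda,L]=H$, $[L,d]=0$ and $[H,d]=-d$. The anticommutation $\delta d=-d\delta$ is similarly formal: expanding $d\delta+\delta d=d\Lambda d-d^2\Lambda+\Lambda d^2-d\Lambda d$ and using $d^2=0$ makes it vanish.

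This leaves the two relations with real content. The first is $[\Lambda,L]=H$, which is a pointwise statement of symplectic linear algebra — the defining relation of the Lefschetz $\mathfrak{sl}(2)$-action — and I would verify it on a symplectic vector space, e.g. by passing to Darboux coordinates and summing over the $n$ symplectic planes. The second, which I expect to be the main obstacle, is $[\Lambda,\delta]=0$, i.e. $\mathrm{ad}_\Lambda^2(d)=0$; this is precisely where the integrability of $\pi$ must enter, since the purely formal manipulations above reduce it to a tautology. Here I would invoke Koszul's bracket calculus: for a bivector $\pi$ one has $[\iota_\pi,[\iota_\pi,d]]=\pm\,\iota_{[\pi,\pi]}$, with $[\pi,\pi]$ the Schouten--Nijenhuis bracket, and the Poisson (equivalently symplectic, $d\omega=0$) condition gives $[\pi,\pi]=0$, whence $[\Lambda,\delta]=0$. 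The same input yields $\delta^2=0$; alternatively both can be checked directly in Darboux coordinates, where $\pi$ has constant coefficients and the double contraction visibly closes.

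Finally I would assemble the remaining claim that $\delta d=-d\delta$ commutes with $L$ and $\Lambda$ from the relations already in hand, via the Leibniz rule for $[\cdot,\cdot]$: $[L,\delta d]=[L,\delta]d+\delta[L,d]=d^2+0=0$ and $[\Lambda,\delta d]=[\Lambda,\delta]d+\delta[\Lambda,d]=0+\delta^2=0$. Thus the only non-formal ingredients are the single symplectic linear-algebra identity $[\Lambda,L]=H$ and the Poisson integrability $[\pi,\pi]=0$ underlying $[\Lambda,\delta]=0$ and $\delta^2=0$; everything else is degree counting, the definition of $\delta$, and the Jacobi and Leibniz rules for commutators.
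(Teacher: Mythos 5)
Your proposal is correct, and it is worth noting that the paper itself offers no proof of this Lemma at all: it is quoted from Koszul and Brylinski (the two references in the Lemma's header), so your self-contained derivation is strictly more detailed than anything in the paper, and it matches the standard arguments in those sources. Your logical organization is sound and, importantly, correctly identifies where the geometry enters: the four $H$-relations and $[H,\delta d]=0$ are pure degree counting (an operator shifting degree by $a$ has $[H,\cdot]=-a\,\cdot$); $[L,d]=0$ is $d\omega=0$; $[\Lambda,d]=\delta$ is definitional; $[L,\delta]=[[L,\Lambda],d]+[\Lambda,[L,d]]=-[H,d]=d$ is a legitimate use of the ordinary Jacobi identity (valid here since $L,\Lambda$ are even, so no super-signs intervene); and $\delta d+d\delta=0$ is immediate from $d^2=0$. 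The two genuine inputs are as you say: the pointwise Lefschetz identity $[\Lambda,L]=H$ --- for which, one small refinement, you need only a symplectic basis at each point (linear algebra), not Darboux's theorem --- and $[\Lambda,\delta]=\mathrm{ad}_\Lambda^2(d)=\pm\iota_{[\pi,\pi]}=0$, where integrability (equivalently $d\omega=0$, via Darboux coordinates in the direct verification) is essential; your awareness that the formal manipulations cannot produce this relation is exactly right. One step deserves to be made explicit rather than left as ``the same input'': your final computation $[\Lambda,\delta d]=\delta^2$ requires $\delta^2=0$, which in fact follows formally from the relations already established, since $\delta^2=(\Lambda d-d\Lambda)\delta=\Lambda d\delta-d\delta\Lambda=-\delta\Lambda d+\delta d\Lambda=-\delta^2$ using $[\Lambda,\delta]=0$ and $\delta d=-d\delta$; so the Schouten-bracket input is needed only once, for $[\Lambda,\delta]=0$. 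With that small addition your argument is complete and proves every relation in the Lemma, including the super Lie algebra structure $\mathfrak{sl}(2,\mathbb{R})\ltimes\mathbb{R}^2$ asserted in the Remark, since the abelianness of the odd part is precisely $d^2=0$, $\delta^2=0$ and $d\delta+\delta d=0$.
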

Using these operators and relations we can verify:

\begin{Lemma}\label{kevin}
\begin{align*}
\partial_{\{\cdot,\cdot\}}{\rm Alt}(m_k)
=(-\delta +\frac{1}{k} d\Lambda ){\rm Alt}(m_{k+1})
\end{align*}
\end{Lemma}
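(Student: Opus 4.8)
The plan is to prove the identity by expanding both sides as sums of monomials of the form (function)$\,\times\,$(wedge of exact one-forms), where the one-forms are either $df_i$ or $d\{f_i,f_j\}$, and then matching coefficients. Two explicit formulas carry all the weight. First, since $[d,{\rm Alt}]=0$ and $dm_{k+1}$ is already skew-symmetric, one has $d\,{\rm Alt}(m_{k+1})(f_1,\dots,f_{k+1})=df_1\wedge\dots\wedge df_{k+1}$, as already noted. Second, contracting the Poisson bivector into a wedge of exact one-forms gives
\[
\Lambda(df_1\wedge\dots\wedge df_m)=\iota_\pi(df_1\wedge\dots\wedge df_m)=\sum_{a<b}(-1)^{a+b-1}\{f_a,f_b\}\,df_1\wedge\dots\widehat{df_a}\dots\widehat{df_b}\dots\wedge df_m,
\]
which follows from $\iota_\pi(df_a\wedge df_b)=\{f_a,f_b\}$.

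First I would rewrite the right-hand side with the help of $\delta=\Lambda d-d\Lambda$ from the preceding Lemma, obtaining $(-\delta+\tfrac1k d\Lambda){\rm Alt}(m_{k+1})=-\Lambda d\,{\rm Alt}(m_{k+1})+\tfrac{k+1}{k}\,d\Lambda\,{\rm Alt}(m_{k+1})$. The first summand is $-\iota_\pi(df_1\wedge\dots\wedge df_{k+1})$ and, by the contraction formula, is a sum of \emph{undifferentiated-bracket} monomials $\{f_i,f_j\}\,df$-wedges. For the second summand I would first compute $\Lambda\,{\rm Alt}(m_{k+1})$ from the explicit formula for ${\rm Alt}(m_{k+1})$ together with the contraction formula, and then apply $d$, using the Leibniz rule $d\big(f_l\{f_a,f_b\}\,\eta\big)=\{f_a,f_b\}\,df_l\wedge\eta+f_l\,d\{f_a,f_b\}\wedge\eta$, where $\eta$ is a wedge of the remaining exact one-forms. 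This splits it into undifferentiated-bracket monomials and \emph{differentiated-bracket} monomials of the shape $f_l\,d\{f_a,f_b\}\wedge(\dots)$.

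In parallel I would expand the left-hand side directly from Definition \ref{def:CE}, writing $\partial_{\{\cdot,\cdot\}}{\rm Alt}(m_k)(f_1,\dots,f_{k+1})=\sum_{i<j}(-1)^{i+j}{\rm Alt}(m_k)(\{f_i,f_j\},f_1,\dots,\widehat{f_i},\dots,\widehat{f_j},\dots,f_{k+1})$ and then inserting the explicit formula for ${\rm Alt}(m_k)$. The slot in which $\{f_i,f_j\}$ stays undifferentiated yields $\tfrac1k\{f_i,f_j\}\,df$-wedges (undifferentiated-bracket monomials), while the slots in which some other $f_l$ stays undifferentiated yield $\tfrac1k f_l\,d\{f_i,f_j\}\wedge(\dots)$ (differentiated-bracket monomials). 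The proof then reduces to matching coefficients within each of the two families. For the differentiated-bracket family the normalizations agree at once, since $\tfrac{k+1}{k}\cdot\tfrac1{k+1}=\tfrac1k$ on the right matches the $\tfrac1k$ on the left, so only the permutation signs remain to be checked. For the undifferentiated-bracket family the decisive point---and precisely the reason the coefficient of $d\Lambda$ must equal $\tfrac1k$---is that a fixed monomial $(-1)^{i+j}\{f_i,f_j\}\,df$-wedge receives weight $+1$ from $-\Lambda d\,{\rm Alt}(m_{k+1})$ and total weight $-\tfrac{k-1}{k}$ from the first family of $\tfrac{k+1}{k}d\Lambda\,{\rm Alt}(m_{k+1})$ (there being $k-1$ admissible choices of the undifferentiated index $l$), and $1-\tfrac{k-1}{k}=\tfrac1k$ is exactly the left-hand weight.

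The main obstacle is the sign bookkeeping: after the index $l$ is deleted, the pair $(a,b)$ in the contraction formula occupies shifted positions, so the Koszul sign $(-1)^{a+b-1}$ must be carefully reconciled with the reordering of $df_l$ back into increasing position and with the signs $(-1)^{i+j}$ and $(-1)^{s+1}$ coming from the Chevalley--Eilenberg differential and from ${\rm Alt}(m_k)$. As a global consistency check one can verify that the $d$-images of the two sides coincide: since $d$ commutes with $\partial_{\{\cdot,\cdot\}}$ and annihilates the $\tfrac1k d\Lambda$ term while $d\delta=-\delta d$, both sides map under $d$ to $\sum_{i<j}(-1)^{i+j}\,d\{f_i,f_j\}\wedge df_1\wedge\dots\widehat{df_i}\dots\widehat{df_j}\dots\wedge df_{k+1}$, a quick computation that corroborates the differentiated-bracket part before the full coefficient match is carried out.
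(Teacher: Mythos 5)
Your proposal is correct, and it rests on the same basic mechanism as the paper's proof: expand both sides into monomials of the two shapes $\{f_i,f_j\}\,df_1\wedge\cdots$ (undifferentiated bracket) and $f_l\,d\{f_i,f_j\}\wedge\cdots$ (differentiated bracket), and match coefficients. The organization, however, is genuinely different. The paper multiplies by $k$ and rewrites the claim as $k\,\partial_{\{\cdot,\cdot\}}{\rm Alt}(m_k)=\bigl(-(k+1)\delta+\Lambda d\bigr){\rm Alt}(m_{k+1})$; it then expands $-(k+1)\delta\,{\rm Alt}(m_{k+1})$ by \emph{quoting} the explicit formula for the Koszul differential on forms $f_0\,df_1\wedge\cdots\wedge df_m$ from Brylinski, observes that this expansion and that of the left-hand side agree except that one undifferentiated-bracket sum runs over $i<j$ while the other runs over $i\neq j$, and identifies the discrepancy as $\Lambda d\,{\rm Alt}(m_{k+1})$ via the contraction formula. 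You instead split the right-hand side as $\bigl(-\Lambda d+\tfrac{k+1}{k}d\Lambda\bigr){\rm Alt}(m_{k+1})$ and compute both pieces from scratch, using only $C^\infty$-linearity of $\iota_\pi$, the contraction formula, and the Leibniz rule --- in effect re-deriving the quoted $\delta$-formula rather than citing it. Your route is more self-contained and makes the origin of the coefficient $\tfrac1k$ transparent (the count $1-\tfrac{k-1}{k}=\tfrac1k$, with $k-1$ the number of admissible undifferentiated slots $l$, which I have verified is correct); the paper's route has lighter bookkeeping, since pitting the left-hand side directly against the $\delta$-term makes the differentiated-bracket families cancel identically, leaving only a single family of terms to identify with $\Lambda d\,{\rm Alt}(m_{k+1})$. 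Both write-ups leave the final permutation-sign verification at the level of careful bookkeeping (the paper tracks it with $\pm$ conventions), so this is not a gap relative to the paper's own standard, and your closing $d$-compatibility check is a correct, genuine consistency test of the differentiated-bracket part.
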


\begin{proof} The Lemma equivalently asserts that
\begin{align*}
    k\partial_{\{\cdot,\cdot\}}{\rm Alt}(m_k) = (-(k+1)\delta + \Lambda d){\rm Alt}(m_{k+1}).
\end{align*}
Applied to functions $f_1,...,f_{k+1}$ the left hand side takes the form
\begin{align*}
&k\partial_{\{\cdot,\cdot\}}{\rm Alt}(m_k)(f_1,...,f_{k+1})\\%
&=\sum_{i<j}(-1)^{i+j}\left( \{f_i,f_j\}df_1\wedge ...\wedge df_{k+1}  + \sum_{r\neq i,j}\pm (-1)^{r+1} f_rd\{f_i,f_j\}df_1\wedge ...\wedge df_{k+1}  \right),
\end{align*}
where in the $...$ the $i,j,r $ components are omitted and the sign $\pm$ is negative if $i<r<j$. At the same time the formula for $\delta$ in \cite{zbMATH04032666} implies that
\begin{align*}
    -(k+1)\delta {\rm Alt}(m_{k+1})(f_1,...,f_{k+1})
    =\sum_{i}(-1)^{i+1} \left(\sum_{j\neq i}\pm (-1)^{j}\{f_i,f_j\} df_1\wedge ...\wedge df_{k+1}  \right.  \\%
      \left. +\sum_{j<r , ~~i,j\neq r}\pm (-1)^{j+r} f_id\{f_j,f_r\}df_1\wedge ...\wedge df_{k+1}\right),
\end{align*}
where the first $\pm$ is negative if $i<j$, and the second one is negative if $j<r<i$. 
These two expressions only differ by the indices of one of first summand ($i<j$ vs. $i\neq j$), hence
\begin{align*}
    &(k+1)\delta {\rm Alt}(m_{k+1})(f_1,...,f_{k+1})+k\partial_{\{\cdot,\cdot\}}{\rm Alt}(m_k)(f_1,...,f_{k+1})\\%
    &=\sum_{i<j}(-1)^{i+j+1} \{f_i,f_j\}df_1\wedge ...\wedge df_{k+1} 
    =\Lambda (df_1\wedge ...\wedge df_{k+1})= \Lambda d{\rm Alt}(m_{k+1}).  
\end{align*}
\end{proof}
The above Lemma \ref{kevin} indicates that the ${\rm Alt}(m_k)$ \emph{almost satisfy} the desired Equation \eqref{eq:tildelk}. In the next subsection we will take a closer look at the defect and how to rectify it.

\subsection{The first higher brackets}

We start with $\tilde l_2={\rm Alt}(m_2)$. Using the above Lemma, we immediately get
\begin{align*}
	\partial_{\{\cdot,\cdot\}}\tilde l_2=(-\delta+\frac{1}{2}d\Lambda){\rm Alt}(m_3)
\end{align*}
Now, as we apply the $d=[L,\delta]=L\delta-\delta L$ to a function ($\Lambda {\rm Alt}(m_3)$), the $L\delta$ component is zero and we get:
\begin{align*}
=(-\delta-\frac{1}{2}\delta L\Lambda){\rm Alt}(m_3)=\delta \circ (-id -\frac{1}{2}L\Lambda){\rm Alt}(m_3).
\end{align*}

Hence, we can define 
\[\tilde l_3=(-id -\frac{1}{2}L\Lambda){\rm Alt}(m_3).\]
We go on to compute:

\begin{align*}
	\partial_{\{\cdot,\cdot\}}\tilde l_3&=\partial_{\{\cdot,\cdot\}}(-id -\frac{1}{2}L\Lambda)\circ {\rm Alt}(m_3)\\
	&=(-id -\frac{1}{2}L\Lambda)(-\delta +\frac{1}{3} d\Lambda ){\rm Alt}(m_{4})\\
	&=
	(\delta +\frac{1}{2}L\Lambda\delta-\frac{1}{3}d\Lambda -\frac{1}{6}L\Lambda d\Lambda){\rm Alt}(m_{4})
\end{align*}
In the second summand we apply $L\Lambda\delta=L\delta\Lambda=\delta L\Lambda +d\Lambda$, to get:
\begin{align*}
&=(\delta +\frac{1}{2}\delta L\Lambda+\frac{1}{2}d\Lambda-\frac{1}{3}d\Lambda -\frac{1}{6}L\Lambda d\Lambda){\rm Alt}(m_{4})\\
&=(\delta +\frac{1}{2}\delta L\Lambda+\frac{1}{6}d\Lambda -\frac{1}{6}L\Lambda d\Lambda){\rm Alt}(m_{4})\\
&=(\delta +\frac{1}{2}\delta L\Lambda +\frac{1}{6}(d-L\Lambda d)\Lambda){\rm Alt}(m_{4})
\end{align*}
Now substitute $L\Lambda d = L\delta + Ld\Lambda= \delta L + d +  Ld\Lambda$. As this term is applied to a one-form, the rightmost summand vanishes and we obtain 
\begin{align*}
&=(\delta +\frac{1}{2}\delta L\Lambda -\frac{1}{6}\delta L \Lambda){\rm Alt}(m_{4})\\
&=(\delta+\frac{1}{3}\delta L\Lambda) {\rm Alt}(m_{4})\\
&=(\delta (id +\frac{1}{3} L\Lambda)) {\rm Alt}(m_{4})
\end{align*}
For 2-dimensional symplectic manifolds, this means that we are done: We have constructed the desired Lie 2-algebra. For higher-dimensional manifolds, we can set
$$\tilde l_4=  (id +\frac{1}{3} L\Lambda) {\rm Alt}(m_{4})$$

Now, we can do the same procedure with $\partial_{\{\cdot,\cdot\}}\tilde l_4$, it works quite analogously, only that there is an additional term, which does not vanish:

$$\partial_{\{\cdot,\cdot\}}\tilde l_4= \delta (-id -\frac{1}{4}L\Lambda - \frac{1}{24}L^2\Lambda^2){\rm Alt}(m_5)$$

Instead of continuing to find the brackets step by step, we will now formulate an Ansatz for the general brackets and find a general solution for symplectic manifolds of any dimension.

\subsection{The $L_\infty$-algebra of a symplectic manifold}

We formulate the following Ansatz for the higher brackets:

\begin{align}\label{eq:ansatz}
\tilde l_k=(-1)^k\left( \sum_{j\geq 0} a_k^jL^j\Lambda^j\right){\rm Alt}(m_k)
\end{align}

The number of non-trivial $\tilde l_k$ and the number of non-zero coefficients in the series are both bounded as the manifold is finite-dimensional and the operator $\Lambda$ nilpotent.   More precisely, $k\leq \dim (M)+1$ and $j\leq \frac{k-1}{2}$.

\begin{Proposition}
The above $\tilde l_k$ satisfy Equation \eqref{eq:tildelk} for 
\begin{align}
    \label{akl}
a_k^j=\frac{(k-j-1)!}{(k-1)!j!}.
\end{align}
\end{Proposition}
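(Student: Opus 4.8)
The plan is to verify the recursion \eqref{eq:tildelk} directly, as an identity of operators applied to ${\rm Alt}(m_{k+1})$. Write $P_k := \sum_{j\ge 0} a_k^j L^j\Lambda^j$, so that $\tilde l_k = (-1)^k P_k\,{\rm Alt}(m_k)$. Since $\partial_{\{\cdot,\cdot\}}$ acts only on the function-arguments while $L,\Lambda$ act on the values in $\Omega^\bullet(M)$, the operator $\partial_{\{\cdot,\cdot\}}$ commutes with post-composition by any linear endomorphism of $\Omega^\bullet(M)$; this is immediate from Definition \ref{def:CE}. Hence, using Lemma \ref{kevin},
\[
\partial_{\{\cdot,\cdot\}}\tilde l_k = (-1)^k P_k\,\partial_{\{\cdot,\cdot\}}{\rm Alt}(m_k) = (-1)^{k+1}P_k\Bigl(\delta - \tfrac1k\, d\Lambda\Bigr){\rm Alt}(m_{k+1}).
\]
Comparing with $\delta\tilde l_{k+1} = (-1)^{k+1}\delta P_{k+1}\,{\rm Alt}(m_{k+1})$, the Proposition becomes the assertion that $P_k\bigl(\delta - \tfrac1k d\Lambda\bigr)$ and $\delta P_{k+1}$ agree on the image of ${\rm Alt}(m_{k+1})$.

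First I would bring both operators into a common normal form, sorting every monomial into one of the two shapes $L^a\Lambda^b\delta$ and $L^a d\Lambda^b$. From the structure relations of the preceding Lemma one obtains by induction the power identities $\delta L^j = L^j\delta - jL^{j-1}d$ (from $[L,\delta]=d$ and $[L,d]=0$), the commutation $[\Lambda,\delta]=0$, and $\Lambda^j d = d\Lambda^j + j\,\delta\Lambda^{j-1}$ (from $[\Lambda,d]=\delta$ and $[\Lambda,\delta]=0$). Substituting these gives
\begin{align*}
P_k\bigl(\delta-\tfrac1k d\Lambda\bigr) &= \sum_j a_k^j\,\tfrac{k-j}{k}\,L^j\Lambda^j\delta \;-\; \tfrac1k\sum_j a_k^j\, L^j d\Lambda^{j+1},\\
\delta P_{k+1} &= \sum_j a_{k+1}^j\, L^j\Lambda^j\delta \;-\; \sum_j a_{k+1}^j\, j\, L^{j-1}d\Lambda^j.
\end{align*}
Matching the coefficient of $L^j\Lambda^j\delta$ requires $\tfrac{k-j}{k}a_k^j = a_{k+1}^j$, while matching the coefficient of $L^{m-1}d\Lambda^m$ (reindexing $m=j+1$ on the left and $m=j$ on the right) requires $\tfrac1k a_k^{m-1} = m\,a_{k+1}^m$. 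Both are elementary consequences of the explicit formula \eqref{akl}: indeed $\tfrac{k-j}{k}\tfrac{(k-j-1)!}{(k-1)!\,j!} = \tfrac{(k-j)!}{k!\,j!} = a_{k+1}^j$, and $\tfrac1k\tfrac{(k-m)!}{(k-1)!\,(m-1)!} = \tfrac{(k-m)!}{k!\,(m-1)!} = m\,a_{k+1}^m$.

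The main subtlety — and the step I expect to be the real obstacle — is that the two operators are \emph{not} equal on all of $\Omega^\bullet(M)$. All $d$-monomials cancel, but in the $\delta$-monomials the relation $\tfrac{k-j}{k}a_k^j = a_{k+1}^j$ fails at the top index $j=k$: the left-hand side vanishes, whereas $a_{k+1}^k = 1/(k!)^2$. Thus
\[
\delta P_{k+1} - P_k\bigl(\delta-\tfrac1k d\Lambda\bigr) = \tfrac{1}{(k!)^2}\,L^k\Lambda^k\delta .
\]
The resolution is a degree count: ${\rm Alt}(m_{k+1})$ takes values in $\Omega^k(M)$, so $\delta\,{\rm Alt}(m_{k+1})\in\Omega^{k-1}(M)$, and $\Lambda^k=\iota_\pi^k$ lowers degree by $2k$, landing in $\Omega^{-(k+1)}(M)=0$ by nilpotency of $\Lambda$. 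Hence the leftover term kills ${\rm Alt}(m_{k+1})$, the two operators agree on its image, and \eqref{eq:tildelk} follows. I would therefore organize the write-up so that the purely algebraic normal-form computation is cleanly separated from this final truncation argument, since it is exactly the degree restriction — already visible in the paper's case-by-case computations, where $L\delta$ annihilates functions and $\Lambda$ annihilates one-forms — that makes the Ansatz \eqref{eq:ansatz} succeed.
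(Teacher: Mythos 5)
Your proof is correct and follows essentially the same route as the paper: both reduce Equation \eqref{eq:tildelk} via Lemma \ref{kevin} to the operator identity $\delta P_{k+1} = P_k\bigl(\delta - \tfrac{1}{k}d\Lambda\bigr)$ on the image of ${\rm Alt}(m_{k+1})\subseteq \Omega^k(M)$, normal-order using the commutation relations, and verify the resulting single-term coefficient recursions directly from the formula \eqref{akl}. The only cosmetic differences are your choice of normal form ($L^a d\Lambda^b$ rather than the paper's $L^j\Lambda^{j+1}d$ after rewriting $d\Lambda=\Lambda d-\delta$) and your explicit isolation of the leftover top term $\tfrac{1}{(k!)^2}L^k\Lambda^k\delta$ killed by the degree count, which the paper handles implicitly by requiring the coefficient conditions only for $2j\le k-1$ and working as operators on $\Omega^k(M)$.
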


\begin{proof}
For the given Ansatz, Equation \eqref{eq:tildelk} boils down to 
\[
\delta \left( \sum_{j\geq 0} a_{k+1}^jL^j\Lambda^j \right)=  \left( \sum_{j\geq 0} a_k^jL^j\Lambda^j \right)\left(\delta -\frac{1}{k}d\Lambda  \right)
\]
as operators on $\Omega^{k}(M)$. We will actually rewrite the right hand side to $\left(\frac{k+1}{k}\delta -\frac{1}{k}\Lambda d  \right)$. Upon multiplying the equation by $k$, we arrive at

\begin{align}\label{eq:sides}
k \delta \left( \sum_{j\geq 0} a_{k+1}^jL^j\Lambda^j \right)=  \left( \sum_{j\geq 0} a_k^jL^j\Lambda^j \right)\left((k+1)\delta -\Lambda d  \right).
\end{align}
We work with the left hand side:
\begin{align*}
	k \delta \left( \sum_{j\geq 0} a_{k+1}^jL^j\Lambda^j \right)
	=\sum_{j\geq 0} k a_{k+1}^j \delta L^j\Lambda^j. 
\end{align*}
We can commute $\delta L^j\Lambda^j =  L^j\delta\Lambda^j - jL^{j-1}d\Lambda^j =   L^j\Lambda^j\delta - jL^{j-1}\Lambda^jd+ j^2L^{j-1}\Lambda^{j-1}\delta$, obtaining 

\begin{align*}
\sum_{j\geq 0}k a_{k+1}^j(L^j\Lambda^j\delta - jL^{j-1}\Lambda^jd+ j^2L^{j-1}\Lambda^{j-1}\delta).
\end{align*}
We now shift the indices in the two rightmost summands to obtain

\begin{align*}
\sum_{j\geq 0}k a_{k+1}^jL^j\Lambda^j\delta -  \sum_{j\geq 0}k a_{k+1}^{j+1} (j+1)L^{j}\Lambda^j\Lambda d    + \sum_{j\geq 0}k a_{k+1}^{j+1}(j+1)^2L^{j}\Lambda^{j}\delta.\\
\end{align*}
By identification of the coefficients of $\delta$ and $\Lambda d$ from both sides of \eqref{eq:sides}, 
equation \eqref{eq:tildelk} is fulfilled if 
the constants $a_k^j$ satisfy the following conditions for $2 j\leq k-1$:

\begin{align}\label{eq:akjrel}
(k+1)a_k^j&=k a_{k+1}^j+k (j+1)^2a_{k+1}^{j+1} \\
a_k^j&=k(j+1) a_{k+1}^{j+1} 
\nonumber
\end{align}
which can be seen to be verified by the proposed $a_k^j$. Note that we have defined more coefficients than necessary for formula \eqref{eq:ansatz} in order to get the same recursive formulas for all coefficients.
\end{proof}

The above discussion can be summed up as follows:

\begin{Theorem}\label{thm:main}
Let $(M,\omega)$ be a $2n$-dimensional symplectic manifold. Then the following defines a grounded $L_\infty$-algebra:
\begin{align*}
    &L_{-i}= \Omega^{i+1}(M)\mathrm{~~~~~for~}i\in\{0,...,2n-1\}\\
    &l_1=\delta\nonumber\\
    &l_k= (-1)^k\left( \sum_{j\geq 0} \frac{(k-j-1)!}{(k-1)!{j}!} L^j\Lambda^j\right){\rm Alt}(m_k) \mathrm{~~~~~for~}k\in\{2,...,2n+1\}
\end{align*}
on the truncated canonical homology complex
\begin{align*}
    \Omega^{2n}(M)\stackrel{\delta}{\rightarrow}\Omega^{2n-1}(M)\dots\stackrel{\delta}{\rightarrow}\Omega^1(M).
\end{align*}
This $L_\infty$-algebra naturally projects to ${\Omega^1(M)}/{\delta\Omega^2(M)}$ via an $L_\infty$-morphism.
\end{Theorem}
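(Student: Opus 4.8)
The plan is to obtain Theorem \ref{thm:main} as the assembly of two results already in place. The first is the reduction Lemma: any family $\tilde l_k \colon \Lambda^k C^\infty(M) \to L_{2-k}$ (for $k \in \{2,\dots,2n+1\}$) satisfying the compatibility \eqref{eq:tildelk} together with the normalization $\tilde l_2(f,g) = f\,dg \bmod \delta\Omega^2(M)$ assembles, via the prescription \eqref{eq:lkdef}, into a grounded $L_\infty$-algebra carrying an $L_\infty$-surjection onto the Lie algebra $\Omega^1(M)/\delta\Omega^2(M)$. The second is the preceding Proposition, which verifies \eqref{eq:tildelk} for the Ansatz \eqref{eq:ansatz} with the explicit coefficients \eqref{akl}, through the operator identity \eqref{eq:sides} and the recursions \eqref{eq:akjrel}. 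Thus the analytic heart of the matter is already done, and the proof of the theorem reduces to checking the hypotheses of the reduction Lemma for this particular choice of $\tilde l_k$.

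I would first record the degree and finiteness bookkeeping. Since ${\rm Alt}(m_k)$ produces a $(k-1)$-form and $L^j\Lambda^j$ preserves form degree, $\tilde l_k$ lands in $\Omega^{k-1}(M) = L_{2-k}$, which is exactly the target making $l_k$ of degree $2-k$; moreover the vanishing of forms of degree $>2n$ forces $k \le 2n+1$, and the nilpotency of $\Lambda$ forces $j \le (k-1)/2$, so that only finitely many $\tilde l_k$ and finitely many terms in each series are nonzero. The displayed formula for $l_k$ in the theorem should be read as shorthand for $l_k(x_1,\dots,x_k) = \tilde l_k(\delta x_1,\dots,\delta x_k)$, so that the brackets are genuinely defined on $L_0 = \Omega^1(M)$ rather than on functions.

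The one genuinely new verification is the base case $k=2$. The bound $j \le 1/2$ leaves only $j=0$, and $a_2^0 = 1$, so $\tilde l_2 = {\rm Alt}(m_2)$, i.e. $\tilde l_2(f,g) = \tfrac12(f\,dg - g\,df)$. Its difference from $f\,dg$ is $-\tfrac12 d(fg)$, which I would place in $\delta\Omega^2(M)$ by applying the relation $d = [L,\delta]$ to the function $h = fg$: since $\delta$ vanishes on functions (they already sit in degree $0$), one gets $dh = -\delta(Lh) = -\delta(h\,\omega)$ with $h\,\omega \in \Omega^2(M)$. Hence $\tilde l_2(f,g) \equiv f\,dg \pmod{\delta\Omega^2(M)}$, the normalization holds, and the reduction Lemma yields the grounded $L_\infty$-algebra together with its $L_\infty$-surjection. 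The latter is the quotient map $\Omega^1(M) \to \Omega^1(M)/\delta\Omega^2(M)$; by Remark \ref{rema} it is automatically strict, and the required identity $f_1 l_2(\cdot,\cdot) = [f_1\cdot,f_1\cdot]$ is precisely the matching of $l_2(\alpha,\beta) = \tilde l_2(\delta\alpha,\delta\beta) \equiv \delta\alpha\cdot d\delta\beta$ with the bracket $[[\alpha],[\beta]] = [\delta\alpha\cdot d\delta\beta]$ on the quotient.

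Because the Proposition carries the real weight, I expect no serious obstacle in the theorem itself; the only delicate point is the base-case identity $d(fg) \in \delta\Omega^2(M)$, which rests entirely on the relation $[L,\delta] = d$ and on $\delta$ annihilating functions. The remaining care needed is organizational: making explicit the passage from the operator statement of the Proposition (identities on $\Omega^\bullet(M)$) to the statement about multilinear maps on $C^\infty(M)$, and confirming that both sides of \eqref{eq:tildelk} are maps $\Lambda^{k+1}C^\infty(M) \to \Omega^{k-1}(M)$, so that domains and targets agree.
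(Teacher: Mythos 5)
Your proposal is correct and takes essentially the same route as the paper: Theorem \ref{thm:main} is obtained there precisely by assembling the reduction Lemma (groundedness plus the $L_\infty$-surjection from the normalization of $\tilde l_2$) with the Proposition verifying the Ansatz \eqref{eq:ansatz} for the coefficients \eqref{akl}. Your explicit check of the base case, $f\,dg + g\,df = d(fg) = -\delta(fg\,\omega)$ via $d = [L,\delta]$ and $\delta|_{\Omega^0}=0$, merely spells out the detail the paper leaves implicit when it asserts that $(f,g)\mapsto f\,dg$ is skew-symmetric up to $\delta\Omega^2(M)$.
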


\begin{Remark}
Moreover, the above formulas are unique among all $l_k$ constructed from the Ansatz \eqref{eq:ansatz}. The compatibility with the Lie bracket on $C^\infty(M)$ implies that $a_2^0=1$. Then, the Relations \eqref{akl} uniquely determine all the other relevant coefficients ($k\leq \dim(M)+1$, $j\leq \frac{k-1}{2}$) through the inductive formulas:
\begin{align*}
a_{k+1}^j=\frac{k-j}{k}a_k^j,&&&a_k^j=\frac{1}{j(k-j)}a_k^{j-1}.
\end{align*}
\end{Remark}

\section{Poisson manifolds}

The construction of an $L_{\infty}$-algebra for a symplectic manifold $(M,\omega)$ 
can be generalised to Poisson manifolds $(M,\pi)$, 
under the condition that
\begin{equation}\label{eq:obstruction}
fd\{g,h\} - \{g,h\}df + \mathrm{cyclic} \in \delta \Omega^2(M)
\end{equation} 
for all $f,g,h\in C^{\infty}(M)$.
\subsection{Central extension of the commutator ideal}
Let $(\Omega^{\bullet}(M),\delta)$ be the chain complex with 
Koszul differential $\delta = i_{\pi} d - d i_{\pi}$.
Then the image of $\delta \colon \Omega^1(M)\rightarrow C^{\infty}(M)$ is the commutator ideal 
\[I := \{C^{\infty}(M),C^{\infty}(M)\}\] 
of the Poisson Lie algebra $C^{\infty}(M)$.
On $\Omega^1(M)$, 
the skew-symmetric bracket
\begin{equation}\label{eq:BracketOmega1}
[\alpha, \beta] := {\textstyle \frac{1}{2}}(\delta \alpha \cdot d\delta\beta - \delta \beta \cdot d\delta\alpha)
\end{equation}
lifts the Poisson bracket on $I\subseteq C^{\infty}(M)$ along the surjective map $\delta \colon \Omega^1(M) \rightarrow I$.

\begin{Proposition}\label{CentralExtensionPoisson}
Suppose that $(M,\pi)$ satisfies \eqref{eq:obstruction}. Then the bracket \eqref{eq:BracketOmega1} 
induces a Lie bracket on the quotient space
$\Omega^1(M)/\delta\Omega^2(M)$, and $\delta \colon {\Omega^1(M)/\delta\Omega^2(M)\rightarrow I}$ is a central 
extension of $I$ by $H^{\mathrm{can}}_1(M)$.
\end{Proposition}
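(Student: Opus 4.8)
The plan is to establish, in order, that the bracket \eqref{eq:BracketOmega1} descends to the quotient, that the descended bracket satisfies the Jacobi identity (this is where the hypothesis \eqref{eq:obstruction} enters), and finally that the resulting short exact sequence is a central extension of $I$ by $H_1^{\mathrm{can}}(M)$.

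For the descent I would first exploit $\delta^2=0$: if $\alpha\in\delta\Omega^2(M)$ then $\delta\alpha=0$, so both $\delta\alpha\cdot d\delta\beta$ and $\delta\beta\cdot d\delta\alpha$ vanish and $[\alpha,\beta]=0$ identically. Thus $[\cdot,\cdot]$ factors through $\Omega^1(M)/\delta\Omega^2(M)$ in each slot and yields a well-defined skew-symmetric bilinear map on the quotient; no hypothesis on $\pi$ is needed at this stage. I would also record that $\bar\delta\colon\Omega^1(M)/\delta\Omega^2(M)\to I$ is well-defined (again by $\delta^2=0$) and that, since \eqref{eq:BracketOmega1} lifts the Poisson bracket, $\delta[\alpha,\beta]=\{\delta\alpha,\delta\beta\}$.

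The heart of the argument is the Jacobi identity. Writing $f=\delta\alpha$, $g=\delta\beta$, $h=\delta\gamma$ and using $\delta[\alpha,\beta]=\{f,g\}$, I would compute $[[\alpha,\beta],\gamma]=\tfrac12(\{f,g\}\,dh-h\,d\{f,g\})$ and sum cyclically, obtaining the Jacobiator
\begin{align*}
J=\tfrac12\sum_{\mathrm{cyc}}\bigl(\{f,g\}\,dh-h\,d\{f,g\}\bigr)=-\tfrac12\bigl(f\,d\{g,h\}-\{g,h\}\,df+\mathrm{cyclic}\bigr).
\end{align*}
The right-hand side is precisely minus one half of the expression in \eqref{eq:obstruction}, so it lies in $\delta\Omega^2(M)$ and $J$ vanishes in the quotient. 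This is the only point at which the hypothesis is used, and the form of \eqref{eq:obstruction} is exactly dictated by this identity.

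Finally, for the central extension I would identify $\ker\bar\delta$ with $H_1^{\mathrm{can}}(M)$: the $1$-cycles $\ker(\delta\colon\Omega^1\to C^\infty)$ modulo the $1$-boundaries $\delta\Omega^2(M)$ are by definition $H_1^{\mathrm{can}}(M)$. Surjectivity onto $I$ is the definition of $I$, and $\bar\delta$ is a Lie homomorphism by $\delta[\alpha,\beta]=\{\delta\alpha,\delta\beta\}$. Centrality is immediate from the first step: if $\delta z=0$ then $[\alpha,z]=0$ for every $\alpha$, so $H_1^{\mathrm{can}}(M)$ is central. The main (and essentially only) obstacle is carrying out the Jacobiator computation and pinning down its sign so that it matches \eqref{eq:obstruction}; once the conventions for $\delta$ and $\{\cdot,\cdot\}$ are fixed so that $\delta[\alpha,\beta]=\{\delta\alpha,\delta\beta\}$, everything else follows formally from $\delta^2=0$ and the lifting property.
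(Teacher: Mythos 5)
Your proposal is correct and follows essentially the same route as the paper: centrality of $H_1^{\mathrm{can}}(M)$ is immediate because the bracket depends only on $\delta\alpha$ and $\delta\beta$, and the Jacobi identity reduces, via $\delta[\alpha,\beta]=\{\delta\alpha,\delta\beta\}$, to the Jacobiator being $\pm\tfrac12$ of the expression in \eqref{eq:obstruction}, hence in $\delta\Omega^2(M)$. Your extra bookkeeping (well-definedness on the quotient, surjectivity onto $I$, the homomorphism property of $\bar\delta$) and the overall sign discrepancy from using $[[\alpha,\beta],\gamma]$ instead of $[\alpha,[\beta,\gamma]]$ are harmless, since membership in the subspace $\delta\Omega^2(M)$ is sign-independent.
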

\begin{proof}
Since $[\mathrm{Ker}(\delta),\Omega^1(M)] = \{0\}$, the canonical homology 
\[H^{\mathrm{can}}_1(M) = \mathrm{Ker}(\delta)/\delta \Omega^2(M)\subseteq \Omega^1(M)/\delta\Omega^2(M)\]
is central. As the bracket
is manifestly skew-symmetric, it remains to show that $\eqref{eq:BracketOmega1}$ satisfies the Jacobi identity 
up to $\delta \Omega^2(M)$.
Let $\alpha, \beta, \gamma \in \Omega^1(M)$ with $f=\delta\alpha$, $g=\delta\beta$ and $h=\delta\gamma$.
Then 
\[
	[\alpha, [\beta, \gamma]] + \mathrm{cyclic} = {\textstyle\frac{1}{2}}(fd\{g,h\} - \{g,h\}df) + \mathrm{cyclic}.
\]
By \eqref{eq:obstruction}, this yields the zero class in $\Omega^1(M)/\delta\Omega^2(M)$.
\end{proof}
For Poisson manifolds that satisfy \eqref{eq:obstruction}, we obtain an exact sequence
\[
0 \rightarrow H_1^{\mathrm{can}}(M) \rightarrow \Omega^1(M)/\delta\Omega^2(M) \rightarrow I \rightarrow 
C^{\infty}(M) \rightarrow H_0^{\mathrm{can}}(M) \rightarrow 0
\]
 of Lie algebras.
 \subsection{An $L_{\infty}$-algebra}
The above central extension can be described by a chain map $(c_1,c_2,c_3)$ from the 3-term complex 
\[\wedge^3C^{\infty}(M)\stackrel{\partial}{\longrightarrow}\wedge^2C^{\infty}(M)\stackrel{\partial}{\longrightarrow}C^{\infty}(M)\] 
to the truncated canonical homology 
complex $\Omega^2(M)\stackrel{\delta}{\longrightarrow}\Omega^1(M) \stackrel{\delta}{\longrightarrow}\Omega^0(M)$,   
\begin{center}
\begin{tikzcd}
\wedge^3C^{\infty}(M) \arrow[r, "c_3"] \arrow[d, "\partial"] & \Omega^2(M) \arrow[d, "\delta"]\\
\wedge^2C^{\infty}(M) \arrow[r, "c_2"] \arrow[d, "\partial"] & \Omega^1(M) \arrow[d, "\delta"] \\ 
C^{\infty}(M) \arrow[r, "c_1"] &\Omega^0(M).
\end{tikzcd}
\end{center}
The map $c_1$ is the identity, and $c_2(f\wedge g) = \frac{1}{2}(fdg-gdf)$ encodes the bracket
$[\alpha,\beta] = c_2(\delta\alpha\wedge\delta\beta)$ on $\Omega^1(M)$.
Commutativity of the bottom square means that the bracket on $\Omega^1(M)$ lifts the Poisson bracket on 
$I\subseteq C^{\infty}(M)$.
Finally, condition~\eqref{eq:obstruction} is equivalent to the existence of a (non-canonical) map $c_3$ such that 
$c_2\circ\partial = \delta c_3$. This ensures that the Jacobiator 
$c_2\circ \partial (\delta\alpha\wedge\delta\beta\wedge\delta\gamma)$ vanishes modulo $\delta\Omega^2(M)$.

To construct a grounded $L_{\infty}$-algebra for this central Lie algebra extension, we need to lift this to a chain map
$c \colon (\wedge^{\bullet}C^{\infty}(M),\partial) \rightarrow (\Omega^{\bullet - 1}(M),\delta)$.
This can be done inductively.
Suppose that we have constructed $c_1, \ldots c_n$ with $\delta c_{i+1} = c_{i}\partial$
for all $i = 1, \ldots, n$.
Then any map $c'_n = c_n + \Delta_n$ that differs from $c_n$ by a map 
$\Delta_n \colon \wedge^nC^{\infty}(M) \rightarrow \Omega^{n-1}(M)$ with 
$\mathrm{Im}(\Delta_{n})\subseteq \mathrm{Ker}(\delta)$ satisfies $\delta c'_n = c_{n-1}\partial$ as well.
A lift $c_{n+1}$ with $\delta c_{n+1} = c_n \partial$ exists if and only if $\mathrm{Im}(c_n\partial) \subseteq \mathrm{Im}(\delta)$. 
\begin{center}
\begin{tikzcd}
\wedge^{n+1}C^{\infty}(M) \arrow[r, "c_{n+1}"] \arrow[d, "\partial"] & \Omega^n(M) \arrow[d, "\delta"]\\
\wedge^nC^{\infty}(M) \arrow[r, "c_n"] \arrow[d, "\partial"] & \Omega^{n-1}(M) \arrow[d, "\delta"] \\ 
\wedge^{n-1}C^{\infty}(M) \arrow[r, "c_{n-1}"] &\Omega^{n-2}(M)
\end{tikzcd}
\end{center}
Since 
$\delta c_n\partial = c_{n-1}\partial^2 = 0$, we have $\mathrm{Im}(c_n\partial) \subseteq \mathrm{Ker}(\delta)$.
If we fix the class 
\[[c_{n}] \colon \wedge^nC^{\infty}(M) \rightarrow \Omega^{n-1}(M)/\delta\Omega^{n-2}(M),\]
then the extension to $c_{n+1}$ is thus obstructed by the linear map
\[
[c_n\partial] \colon \wedge^{n+1}C^{\infty}(M) \rightarrow H^{\mathrm{can}}_{n-1}(M).
\]
However, by changing $c_n$ to $c'_{n} = c_n + \Delta_n$, we can always arrange this obstruction to vanish.
Indeed, since $c_n \partial$ takes values in $\mathrm{Ker}(\delta)$, we can choose any map 
$\Delta_n \colon \wedge^nC^{\infty}(M) \rightarrow \mathrm{Ker}(\delta)$ 
that agrees with $-c_n$ on $\partial (\wedge^{n+1}C^{\infty}(M)) \subseteq \wedge^nC^{\infty}(M)$.
Then $c'_n$ vanishes on $\partial(\wedge^{n+1}C^{\infty}(M))$, and we may in fact simply choose 
$c_{n+1}=0$.

To define an $L_{\infty}$-algebra, we set $L_{-i} := \Omega^{i+1}(M)$ for $i\geq 0$, with differential $l_1 = \delta$.
The higher brackets $l_k \colon \wedge^k L \rightarrow L$ are nontrivial only on $\wedge^k\Omega^1(M)$,
where they are given by 
\[l_k(\alpha_1, \ldots, \alpha_k) = c_k(\delta\alpha_1\wedge \ldots \wedge \delta\alpha_k).
\]

\begin{Proposition}\label{prop:generalex}
If $(M,\pi)$ satisfies the condition \eqref{eq:obstruction}, then there exists an $L_{\infty}$-algebra 
with $L_{-i} = \Omega^{i+1}(M)$ with differential $l_1 = \delta$ and degree two bracket
$l_2(\alpha,\beta) = \frac{1}{2}(\delta\alpha d\delta\beta - \delta\beta d\delta\alpha)$. 
It is possible to choose $l_k=0$ for all $k>3$, resulting in a Lie 2-algebra.
\end{Proposition}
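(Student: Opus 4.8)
\emph{Strategy.} The plan is to reduce the statement to the construction of a chain map $c=(c_k)_{k\ge 1}$ from the Chevalley--Eilenberg chain complex $(\wedge^{\bullet}C^{\infty}(M),\partial)$ of the Poisson Lie algebra to the truncated canonical complex $(\Omega^{\bullet-1}(M),\delta)$, as in the diagrams above, and then to read off the brackets via $l_k(\alpha_1,\dots,\alpha_k)=c_k(\delta\alpha_1\wedge\cdots\wedge\delta\alpha_k)$. Since each $c_k$ is defined on an exterior power, these $l_k$ are automatically skew-symmetric. I would then invoke the characterisation of grounded $L_\infty$-algebras: because $l_1=\delta$ and the higher $l_k$ are nontrivial only on $\wedge^k\Omega^1(M)$ and factor through $\delta$, it suffices to verify the two conditions $l_k(l_1\alpha,x_2,\dots,x_k)=0$ and $\partial_{l_2}l_k=l_1l_{k+1}$. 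The first is automatic, since $\delta^2=0$ forces $l_k(\delta\beta,x_2,\dots,x_k)=c_k(\delta^2\beta\wedge\cdots)=0$.

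\emph{The translation identity.} The heart of the reduction is to show that the second condition is \emph{exactly} the chain-map equation $\delta c_{k+1}=c_k\partial$. First I would record that the bottom square $\delta c_2=c_1\partial$ (equivalently, that \eqref{eq:BracketOmega1} lifts the Poisson bracket, cf.\ Proposition~\ref{CentralExtensionPoisson}) gives $\delta\,l_2(x_i,x_j)=\{\delta x_i,\delta x_j\}$. Substituting this into Definition~\ref{def:CE} and using that $\partial$ is the Chevalley--Eilenberg chain differential yields
\[
(\partial_{l_2}l_k)(x_1,\dots,x_{k+1})=(c_k\partial)(\delta x_1\wedge\cdots\wedge\delta x_{k+1}),
\]
whereas $(l_1l_{k+1})(x_1,\dots,x_{k+1})=\delta c_{k+1}(\delta x_1\wedge\cdots\wedge\delta x_{k+1})$. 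Hence $\partial_{l_2}l_k=l_1l_{k+1}$ is equivalent to $\delta c_{k+1}=c_k\partial$ restricted to the image of $\delta$, and is implied by the chain-map relation on all of $\wedge^{k+1}C^{\infty}(M)$. This is the Poisson analogue of~\eqref{eq:tildelk}--\eqref{eq:lkdef}.

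\emph{Constructing and truncating the chain map.} For the base case I would take $c_1=\mathrm{id}$ and $c_2(f\wedge g)=\tfrac12(fdg-gdf)$; then $\delta c_2=c_1\partial$ holds, and condition~\eqref{eq:obstruction} is precisely the statement that $c_2\partial$ takes values in $\mathrm{Im}(\delta)$, so a lift $c_3$ with $\delta c_3=c_2\partial$ exists. To obtain a structure with $l_k=0$ for $k>3$, I would not extend further but instead gauge away the next differential: since $\delta c_3\partial=c_2\partial^2=0$, the map $c_3$ sends the subspace $\partial(\wedge^4C^{\infty}(M))$ into $\mathrm{Ker}(\delta)$, so I may choose $\Delta_3\colon\wedge^3C^{\infty}(M)\to\mathrm{Ker}(\delta)$ agreeing with $-c_3$ there (extended by zero on a linear complement). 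Replacing $c_3$ by $c_3'=c_3+\Delta_3$ leaves $\delta c_3'=c_2\partial$ intact, since $\mathrm{Im}(\Delta_3)\subseteq\mathrm{Ker}(\delta)$, while forcing $c_3'\partial=0$. Setting $c_k=0$ for all $k\ge 4$ then completes a chain map, as $\delta c_4=0=c_3'\partial$ and $\delta c_{k+1}=0=c_k\partial$ thereafter; the resulting brackets $l_1=\delta$, $l_2$ as prescribed, $l_3=c_3'(\delta\,\cdot\wedge\delta\,\cdot\wedge\delta\,\cdot)$ and $l_k=0$ $(k>3)$ form a grounded $L_\infty$-algebra, a Lie $2$-algebra.

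\emph{Main obstacle.} The real point is not the existence of $c_3$, which is handed to us by~\eqref{eq:obstruction}, but the fact that one can \emph{truncate}. A priori, lifting past $c_3$ could force a long tower of nonzero brackets $c_4,\dots,c_{2n+1}$, with each stage obstructed by a class in $\mathrm{Hom}(\wedge^{k+1}C^{\infty}(M),H^{\mathrm{can}}_{k-1}(M))$. The step I expect to be most delicate is verifying that modifying $c_3$ inside $\mathrm{Ker}(\delta)$ simultaneously preserves the lower relation $\delta c_3'=c_2\partial$ and annihilates $c_3'\partial$, for this is exactly what permits every higher bracket to vanish. The accompanying bookkeeping—that altering $c_3$ changes only $l_3$ and not the prescribed $l_1,l_2$, and that the surviving relations close without reference to $\Omega^{\ge 3}(M)$—is routine but should be checked.
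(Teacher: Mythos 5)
Your proposal is correct and takes essentially the same route as the paper: the paper builds the identical chain map $c=(c_k)$ with $c_1=\mathrm{id}$ and $c_2(f\wedge g)=\tfrac12(fdg-gdf)$, uses condition \eqref{eq:obstruction} exactly as you do to produce $c_3$ with $\delta c_3=c_2\partial$, and performs precisely your gauge modification $c_n'=c_n+\Delta_n$ with $\mathrm{Im}(\Delta_n)\subseteq\mathrm{Ker}(\delta)$ agreeing with $-c_n$ on $\partial(\wedge^{n+1}C^{\infty}(M))$, so that all subsequent $c_k$ may be chosen zero, with the brackets read off via $l_k(\alpha_1,\dots,\alpha_k)=c_k(\delta\alpha_1\wedge\cdots\wedge\delta\alpha_k)$. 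The only cosmetic difference is that the paper states the gauging step for general $n$ in an inductive scheme, whereas you apply it directly at $n=3$, which is exactly the instance needed for the truncation claim.
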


The following reformulation of 
condition~\eqref{eq:obstruction} can be handy in 
concrete examples.
\begin{Proposition}
Condition~\eqref{eq:obstruction} is satisfied 
if and only if
\begin{equation}
d\big(f\{g,h\} + g\{h,f\} + h\{f,g\}\big) \in \delta \Omega^2(M)
\quad \text{for all} \quad f, g, h \in C^{\infty}(M).
\end{equation}
In particular, a sufficient condition for \eqref{eq:obstruction} is that $d\Omega^0(M) \subseteq \delta\Omega^2(M)$.
\end{Proposition}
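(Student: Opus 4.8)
The plan is to pin down a single explicit linear relation, modulo $\delta\Omega^2(M)$, between the two expressions, obtained by applying the Koszul differential to one well-chosen $2$-form. Write $A := fd\{g,h\} - \{g,h\}df + \mathrm{cyclic}$ for the left-hand side of \eqref{eq:obstruction} and $B := d\big(f\{g,h\} + g\{h,f\} + h\{f,g\}\big)$ for the left-hand side of the asserted condition. First I would expand $B$ by the Leibniz rule: each summand $d(f\{g,h\}) = f\,d\{g,h\} + \{g,h\}\,df$ contributes one piece already present in $A$ and one extra piece $\{g,h\}df$. Because $A$ carries these extra pieces with the opposite sign, collecting everything gives $B = A + 2D$, where $D := \{g,h\}df + \{h,f\}dg + \{f,g\}dh$. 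At this point the tempting shortcut --- concluding that $A$ and $B$ are congruent modulo $\delta\Omega^2(M)$ --- is \emph{not} available, since $D$ is not in $\delta\Omega^2(M)$ in general; recognizing this is the real content of the statement.

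The second and decisive step is to compute $\delta\eta$ for the $2$-form $\eta := f\,dg\wedge dh + g\,dh\wedge df + h\,df\wedge dg$, which lies in $\Omega^2(M)$, so that $\delta\eta \in \delta\Omega^2(M)$ automatically. Using $\delta = \iota_\pi d - d\iota_\pi$ I would compute $d\eta = 3\,df\wedge dg\wedge dh$ and $\iota_\pi\eta = f\{g,h\} + g\{h,f\} + h\{f,g\}$, whence $d\iota_\pi\eta = B$. The one genuinely nonformal input is the identity $\iota_\pi(df\wedge dg\wedge dh) = D$, which I would verify from the contraction rule $\iota_\pi(\alpha_1\wedge\alpha_2\wedge\alpha_3) = \pi(\alpha_1,\alpha_2)\alpha_3 - \pi(\alpha_1,\alpha_3)\alpha_2 + \pi(\alpha_2,\alpha_3)\alpha_1$ together with $\pi(df_i, df_j) = \{f_i, f_j\}$. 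Putting these together yields $\delta\eta = 3D - B$.

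The third step is pure linear algebra over $\mathbb{R}$: eliminating $D$ between $B = A + 2D$ and $\delta\eta = 3D - B$ gives $3A - B = -2\,\delta\eta \in \delta\Omega^2(M)$. Since $\delta\Omega^2(M)$ is a linear subspace, this forces $A \in \delta\Omega^2(M)$ if and only if $B \in \delta\Omega^2(M)$, which is the claimed equivalence. The ``in particular'' clause then falls out immediately: if $d\Omega^0(M) \subseteq \delta\Omega^2(M)$, then $B$, being $d$ of the function $f\{g,h\} + g\{h,f\} + h\{f,g\}$, lies in $d\Omega^0(M) \subseteq \delta\Omega^2(M)$, so $A \in \delta\Omega^2(M)$ and \eqref{eq:obstruction} holds.

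I expect the main obstacle to be conceptual rather than computational: one must avoid the plausible-looking but false claim that $A \equiv B$ directly, and instead realize that it is the combination $3A - B$ (not $A - B$) that lands in $\delta\Omega^2(M)$. Guessing the precise $2$-form $\eta$ whose Koszul differential delivers exactly $3D - B$ is the key move; after that, the only delicate point is the sign bookkeeping in $\delta\eta$, which depends on the orientation and sign convention for $\iota_\pi$ but is otherwise routine.
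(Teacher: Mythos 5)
Your proof is correct and follows essentially the same route as the paper: the paper's entire proof consists of the single identity relating $\delta(f\,dg\wedge dh + \mathrm{cyclic})$, $d(f\{g,h\}+\mathrm{cyclic})$ and $fd\{g,h\}-\{g,h\}df+\mathrm{cyclic}$, i.e.\ the same $2$-form $\eta$ and the same linear elimination you carry out in detail. In fact, with the paper's own conventions (under which $\delta(f\,dg)=\{f,g\}$, as required for $\delta$ to intertwine the brackets), your version $3A-B=-2\delta\eta$ has the correct sign, whereas the paper's displayed identity $2\delta\eta - B = 3A$ is off by a sign on the right-hand side --- a harmless slip, since the argument only uses that the relevant combination lies in the linear subspace $\delta\Omega^2(M)$.
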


\begin{proof}
This follows from the equality
\[
2 \delta(fdg\wedge dh + \mathrm{cyclic}) - d(f\{g,h\} + \mathrm{cyclic}) = 3(fd\{g,h\} - \{g,h\}df + \mathrm{cyclic}).\qedhere
\]
\end{proof}

\begin{Example}
To see that \eqref{eq:obstruction} is 
satisfied for a symplectic manifold $(M,\omega)$, 
note that
\[df = -\delta * f \omega^{n-1}/(n-1)!\]
for all $f\in \Omega^0(M)$, where $*$ is the symplectic Hodge-star operator  \cite[Lemma 2.3]{MR3556431}.
\end{Example}

\begin{Example}
The linear Poisson structure on $M = \mathfrak{sl}(2,\mathbb{R})^*$
affords an
example of a singular Poisson structure where the condition \eqref{eq:obstruction} is satisfied.

Its
canonical homology was recently determined by 
M\u{a}rcu\c{t} and Zeiser \cite{IonutFlorian}. 
In coordinates $x,y,z$ where $\pi = x\partial_{y}\wedge\partial_{z} + 
y\partial_z\wedge\partial_x - z\partial_{x}\wedge\partial_y$,
every class 
in $H_1^{\mathrm{can}}(M)$ 
admits a representative of the form $f d\theta$, where $f$
is a Casimir function with support contained in the outside 
$\{x^2 + y^2 - z^2 \geq 0\}$
of the nilpotent cone, and $d\theta = 
(x^2 + y^2)^{-1/2}(xdy-ydx)$.
To evaluate the obstruction \eqref{eq:obstruction},
note that $i_{\pi} \Omega^3(M) = C^{\infty}(M) (xdx + ydy - zdz)$. 
For any 2-form $\beta$, this yields
\[\oint \delta \beta = \oint (i_{\pi}d - d i_{\pi})\beta
= \oint i_{\pi}d\beta = 0,\]
where the line integral is over 
any circle $S_{r}^{1} = \{x^2 + y^2 = r^2, z = 0\}$.
If $[\alpha] \in H_1^{\mathrm{can}}(M)$ is represented by $fd\theta$,
we thus have 
$f(x,y,z) = \frac{1}{2\pi }\oint_{S^1_{r}}\alpha$ for $x^2 + y^2 - z^2 = r^2 >0$, so $[\alpha] = [0]$ if and only if $\oint_{S^1_{r}}\alpha = 0$
for all $r>0$. In particular, $d\Omega^0(M)\subseteq \delta\Omega^2(M)$, and the 
obstruction \eqref{eq:obstruction} vanishes.
\end{Example}

\begin{Remark}
Note that in the symplectic case, Proposition \ref{prop:generalex} yields a more general existence result than Theorem \ref{thm:main}.
However, in contrast with Proposition~\ref{prop:generalex}, the construction in Theorem \ref{thm:main} is functorial. More precisely, it yields a contravariant functor from 
the category of symplectic manifolds (with symplectic local diffeomorphisms) to the category of $L_{\infty}$-algebras (with strict morphisms). This is a consequence of $L,\Lambda,d,\delta$ being preserved by symplectomorphisms and the fact that the brackets defined in Theorem \ref{thm:main} are local.
\end{Remark}
Constructing natural $L_\infty$-algebras for non-symplectic Poisson manifolds will be approached in a forthcoming work.

\bibliographystyle{alpha}
\bibliography{biblio}

\bigskip{
  \textsc{Bas Janssens},\par  \textrm{Institute of Applied Mathematics, Delft University of Technology, 2628 XE Delft, The Netherlands.} 
  \par
  \texttt{b.janssens@tudelft.nl}
  
  \addvspace{\medskipamount}
  
  \textsc{Leonid Ryvkin},\par  \textrm{Mathematical Institute, Georg-August-Universität Göttingen, Bunsenstr. 3-5, 37073 Göttingen, Germany.} \par \texttt{Leonid.Ryvkin@mathematik.uni-goettingen.de}
  
  \addvspace{\medskipamount}
  \textsc{Cornelia Vizman},\par  \textrm{Department of Mathematics, West University of Timi\c soara. 300223 Timi\c soara, Romania.} \par \texttt{cornelia.vizman@e-uvt.ro}
}

\end{document}